\def\thm@space@setup{%
  \thm@preskip=\parskip \thm@postskip=0pt
}
\def\qed{\hfill\ifhmode\unskip\nobreak\fi\quad\ifmmode\Box\else\hfill$\Box$\fi}
\def\ite#1{\hfill\break${}$\hbox to 50pt {\quad(#1)\hfill}}
\newtheorem{thm}{Theorem}[section]
\newtheorem{cor}[thm]{Corollary}
\newtheorem{definition}{Definition}
\newtheorem{lem}[thm]{Lemma}
\newtheorem{conjecture}{Conjecture}
\newtheorem{prop}{Proposition}[section]
\def\ex{{\rm{ex}}}
\newcommand{\cir}{\circlearrowright}
\newcommand{\phir}{{r \choose \lfloor r/2 \rfloor}}
\def\cF{{\mathcal F}}
\def\cI{{\mathcal I}}
\def\cJ{{\mathcal J}}
\begin{document}

\pagestyle{myheadings}
\markright{{\small{\sc F\"uredi, Jiang, Kostochka, Mubayi, and Verstra\"ete:   Crossing paths, JULY 12
}}}

\title{
\vspace{-0.8in}Extremal problems on ordered and convex geometric hypergraphs}

\author{
\hspace{0.8in} Zolt\'an F\" uredi\thanks{Research supported by grant K116769
from the National Research, Development and Innovation Office NKFIH and
by the Simons Foundation Collaboration grant \#317487.}
\and
Tao Jiang\thanks{Research partially supported by National Science Foundation award DMS-1400249.}
\and
Alexandr Kostochka\thanks{Research  supported in part by NSF grant
 DMS-1600592 and by grants 18-01-00353A and 16-01-00499
  of the Russian Foundation for Basic Research.
} \hspace{0.8in} \smallskip \and
Dhruv Mubayi\thanks{Research partially supported by NSF awards DMS-1300138 and 1763317.} \and Jacques Verstra\"ete\thanks{Research supported by NSF award DMS-1556524.}
}

\maketitle

\vspace{-0.5in}

\begin{abstract}
An ordered hypergraph is a hypergraph whose vertex set is linearly ordered, and a convex geometric hypergraph is
a hypergraph whose vertex set is cyclically ordered. Extremal problems for ordered and convex geometric graphs have a rich history with applications to a variety of problems in combinatorial geometry. In this paper, we consider analogous extremal problems for uniform hypergraphs, and discover a general partitioning phenomenon which allows us to determine the order of magnitude
of the extremal function for various ordered and convex geometric hypergraphs. A special case is the ordered
$n$-vertex $r$-graph $F$ consisting of two disjoint sets $e$ and $f$ whose vertices alternate in the ordering. We show that for all $n \geq 2r + 1$, the maximum number of edges in an ordered $n$-vertex $r$-graph not containing $F$ is exactly
\[ {n \choose r} - {n - r  \choose r}.\]
This could be considered as an ordered version of the Erd\H{o}s-Ko-Rado Theorem, and generalizes earlier results of Capoyleas and Pach and Aronov-Dujmovi\v{c}-Morin-Ooms-da Silveira.
\end{abstract}

\section{Introduction}

An {\em ordered graph} is a graph together with a linear ordering of its vertex set. Extremal problems for ordered graphs have a long history, and were studied extensively in papers by Pach and Tardos~\cite{PT}, Tardos~\cite{T} and Kor\'{a}ndi, Tardos, Tomon and Weidert~\cite{KTTW}. Let $\ex_{\rightarrow}(n,F)$ denote the maximum number of edges in an $n$-vertex ordered graph that does not contain the ordered graph $F$. This extremal problem is phrased in~\cite{KTTW} in terms of pattern-avoiding matrices. Marcus and Tardos~\cite{MT} showed that if the forbidden pattern is
a permutation matrix, then the answer is in fact linear in $n$, and thereby solved the Stanley-Wilf Conjecture,
as well as a number of other well-known open problems. A central open problem in the areas was posed by
Pach and Tardos~\cite{PT}, in the form of the following conjecture:

\begin{conjecture}\label{ptc}
Let $F$ be an ordered acyclic graph with interval chromatic number two. Then $\ex_{\rightarrow}(n,F) = O(n\cdot \mbox{\rm polylog} \,n)$.
\end{conjecture}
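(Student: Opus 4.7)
The plan is to mimic the block-partitioning strategy that has driven essentially all known progress on the Pach--Tardos conjecture, adapted to a general acyclic ordered graph $F$ with interval chromatic number two. Write $V(F) = A \prec B$ so that every edge of $F$ joins $A$ to $B$; in particular $F$ is an ordered bipartite forest. Given an $F$-free ordered graph $G$ on $[n]$, encode it by its adjacency $0/1$ matrix $M$, with rows and columns both indexed by $[n]$; avoiding $F$ becomes avoiding a specific bipartite ordered pattern in $M$.

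First I would reduce to the case in which $F$ is a tree. If $F = F_1 \sqcup \cdots \sqcup F_k$, partition the rows into $k$ equal row-blocks $R_1, \dots, R_k$ and similarly for columns, and apply the single-tree bound inside each diagonal block; handling the combinatorial constraint that the $F_i$ must appear disjointly and in the prescribed order contributes at most a polylog blow-up, so an $O(n \, \mathrm{polylog}\, n)$ bound for trees suffices. Second, I would partition $[n]$ into $t = t(F)$ consecutive intervals of length $n/t$ and split $E(G)$ into intra-interval and cross edges. Intra-interval edges obey the same extremal problem at scale $n/t$, giving a recursion
\[
\ex_{\rightarrow}(n,F) \;\leq\; t \cdot \ex_{\rightarrow}(n/t, F) \;+\; X(n, t, F),
\]
where $X$ counts cross edges; iterating, the conjecture reduces to showing $X(n,t,F) = O(n \, \mathrm{polylog}\, n)$.

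For $X(n, t, F)$ I would define an auxiliary $t \times t$ matrix $\widetilde{M}$ whose $(I,J)$-entry records the isomorphism type of the bipartite sub-pattern induced on the pair of intervals $(I, J)$, then hope to argue via a Marcus--Tardos style bound that only $O(t)$ of the $t^2$ blocks can be ``heavy''. The main obstacle, and the reason the conjecture remains open in general, is that for an arbitrary bipartite tree $F$ (in contrast to a permutation pattern) the natural block type is not a finite object: two dense blocks can embed genuinely different fragments of $F$ without either one embedding $F$ itself, so there is no a priori constant bound on the number of heavy blocks. I would try to circumvent this by a dependent random choice argument within each dense block, extracting a controlled sub-tree $F' \subset F$ whose copies across multiple blocks can be stitched into a full copy of $F$ using the interval-bipartite structure. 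Verifying this stitching step for every ordered bipartite tree is, in my view, the crux of the conjecture, and is where I would expect to get stuck.
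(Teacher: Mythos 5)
This statement is Conjecture~A (the Pach--Tardos conjecture), which the paper explicitly presents as a \emph{central open problem}; the paper neither proves it nor claims to, so there is no ``paper's own proof'' to compare against. Your proposal is honest about this: you outline the standard block-decomposition/Marcus--Tardos strategy and then correctly identify where it breaks down, namely that an arbitrary ordered bipartite tree does not give a finite block type and so the ``at most $O(t)$ heavy blocks'' step has no analogue. That diagnosis is accurate --- this is precisely the obstacle that has kept the conjecture open --- but it means your proposal is a discussion of an approach, not a proof.

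One remark on how the paper \emph{does} engage with the conjecture, which differs somewhat from the route you sketch. The paper's relevant tool is Theorem~\ref{splitting1} (and, in the hypergraph setting, Theorem~\ref{splitting}): a general reduction from $\ex_{\rightarrow}(n,F)$ to $z_{\rightarrow}(n,F)$, the extremal number restricted to interval $r$-chromatic host graphs, at the cost of a $\log n$ factor. This is the hypergraph generalization of the Pach--Tardos observation that any ordered graph decomposes into $\lceil \log n\rceil$ interval-2-chromatic pieces. Your recursion $\ex_{\rightarrow}(n,F)\le t\cdot\ex_{\rightarrow}(n/t,F)+X(n,t,F)$ is the same idea in a different guise, but the paper stops at the reduction and verifies the resulting bound only for specific families (crossing paths, via Theorem~\ref{cpthm}); it does not attempt the ``stitching'' step you describe, because that step is exactly what is missing for general forests. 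So your proposal does not fill the gap --- nor does the paper claim to --- and you should present it as a survey of the known reduction rather than as a proof.
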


In support of Conjecture A, Kor\'{a}ndi, Tardos, Tomon and Weidert~\cite{KTTW} proved for a wide class of forests $F$ that
$\ex_{\rightarrow}(n,F) = n^{1 + o(1)}$. This conjecture is related to a question of Bra{\ss} in the context of
convex geometric graphs.

A {\em convex geometric graph} is a graph together with a cyclic ordering of its vertex set. Given a convex geometric graph $F$, let $\ex_{\circlearrowright}(n,F)$ denote the maximum number of edges in an $n$-vertex convex geometric graph that does not contain $F$. Extremal problems for geometric graphs have a fairly long history, going back to theorems on disjoint line segments~\cite{Hopf-Pannwitz,Sutherland,Kupitz-Perles}, and more recent results on crossing matchings~\cite{Brass-Karolyi-Valtr,Capoyleas-Pach}. Motivated by the famous Erd\H{o}s unit distance problem, the first author~\cite{Furedi} showed that the maximum number of unit
distances between points of a convex $n$-gon is $O(n\log n)$. In the vein of Conjecture \ref{ptc},
Bra{\ss}~\cite{Brass} asked for the determination of all acyclic graphs $F$ such that $\ex_{\circlearrowright}(n,F)$ is linear in $n$, and this problem remains open.

In this paper, we study extremal problems for ordered and convex geometric uniform hypergraphs. An {\em ordered $r$-graph} is an $r$-uniform hypergraph whose vertex set is linearly ordered. A {\em convex geometric $r$-graph} is an $r$-uniform hypergraph whose vertex set is cyclically ordered. We denote by $\ex_{\rightarrow}(n,F)$ the maximum number of edges in an $n$-vertex ordered $r$-graph that does not contain $F$, and let $\ex(n,F)$ denote the usual (unordered) extremal function. Similarly we write $\ex_{\circlearrowright}(n,F)$  in the convex geometric hypergraph setting. As is the case for convex geometric graphs, the extremal problems for convex geometric hypergraphs are frequently motivated by problems in discrete geometry~\cite{Brass-Rote-Swanepoel, Pach-Pinchasi,Brass,Aronov}. Instances of the extremal problem for two disjoint triangles in the convex geometric setting are connected to the well-known triangle-removal problem~\cite{Gowers-Long}. In~\cite{FJKMV} we show that certain types of paths in the convex geometric setting give the current best bounds for the notorious extremal problem for tight paths in uniform hypergraphs. One of the goals of this paper is to show similarities and differences in solutions of an extremal problem in linearly ordered and cyclically ordered settings.

\section{Results}
\subsection{A splitting theorem}

Given subsets $A, B$ of an ordered set, write $A<B$ to mean that $a<b$ for each $a \in A$ and $b \in B$. For $k \ge r \ge 2$, an  ordered $r$-graph  has {\em 
	interval chromatic number} $k$ if its vertex set can be partitioned into $k$ sets $A_1<A_2<\cdots < A_k$ such that every edge has at most one vertex in each $A_i$. Of particular interest to us is the case $k=r$, when the sets $A_i$ give an $r$-partition of the $r$-graph.  

Let $z_{\rightarrow}(n,F)$ denote the maximum number of edges in an $n$-vertex ordered $r$-graph of interval chromatic number $r$ that does not contain the ordered graph $F$.
Pach and Tardos~\cite{PT} showed that any $n$-vertex ordered graph may be written as a union of at most $\lceil \log n\rceil$ edge disjoint subgraphs each of whose components is a graph of interval chromatic number two, and deduced for every ordered graph $F$ that $\ex_{\rightarrow}(n,F) = O(z_{\rightarrow}(n,F)\log n)$. Our first result generalizes their result to hypergraphs. 

\begin{thm}\label{splitting1}
Fix $r \ge c\ge r-1\ge 1$ and an ordered $r$-graph $F$ with $z_{\rightarrow}(n, F)=\Omega(n^{c})$. Then 
 \[ \ex_{\to}(n, F) = \left\{\begin{array}{ll}
O(z_{\to}(n, F) \log n) & \mbox{ if } c =r-1 \\
O(z_{\to}(n, F))& \mbox{ if }c >r-1. 
\end{array}\right.\]
\end{thm}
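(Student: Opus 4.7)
The plan is to generalize the Pach--Tardos dyadic decomposition argument from ordered graphs to $r$-uniform ordered hypergraphs. I would fix the canonical nested dyadic partition of $[n]$: at level $\ell$, there are $2^\ell$ intervals $I_{\ell,1}<\cdots<I_{\ell,2^\ell}$ of size $\lceil n/2^\ell\rceil$. For each edge $e=\{v_1<\cdots<v_r\}$ of the host $F$-free ordered $r$-graph $G$, define $\ell(e)$ to be the smallest level at which the $r$ vertices of $e$ lie in $r$ distinct intervals; one has $\lceil\log r\rceil\le\ell(e)\le\lceil\log n\rceil$, and this induces the level-partition $E(G)=\bigsqcup_\ell E_\ell$.

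Within each level class $E_\ell$ I would refine the edges by their interval-index tuple $(j_1,\ldots,j_r)$ where $v_i\in I_{\ell,j_i}$ and $j_1<\cdots<j_r$. The minimality of $\ell(e)=\ell$ forces some two vertices of $e$ to have shared an interval at level $\ell-1$; equivalently, at least one consecutive pair of indices $(j_i,j_{i+1})$ must be siblings in the dyadic tree (i.e., $\lceil j_i/2\rceil=\lceil j_{i+1}/2\rceil$). A direct count then shows there are at most $O(r\cdot 2^{(r-1)\ell})$ such valid tuples per level. Each valid tuple defines an interval-$r$-chromatic sub-$r$-graph on at most $r\lceil n/2^\ell\rceil$ vertices, so it contributes at most $z_\to(r\lceil n/2^\ell\rceil,F)$ edges. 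Summing yields the raw estimate
\[
\ex_\to(n,F)\le\sum_{\ell=\lceil\log r\rceil}^{\lceil\log n\rceil} O\bigl(r\cdot 2^{(r-1)\ell}\bigr)\,z_\to\bigl(r\lceil n/2^\ell\rceil,F\bigr).
\]

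To convert this raw estimate into the final bound I would couple it with a polynomial comparison of the form $z_\to(m,F)=O\bigl((m/n)^{c}\bigr)\,z_\to(n,F)$ for $m\le n$, matching the hypothesised exponent $c\ge r-1$. The natural proof of such a comparison is a blow-up argument: an extremal interval-$r$-chromatic $F$-free configuration on $m$ vertices can be replicated $\sim (n/m)^c$ times inside an interval-$r$-chromatic layout on $n$ vertices while remaining $F$-free, since any embedded copy of $F$ would collapse into a single replicated block and contradict the $F$-freeness of the base. Substituting, each summand becomes $O(r^{c+1}\cdot 2^{(r-1-c)\ell})\,z_\to(n,F)$; for $c=r-1$ this is $O(1)\,z_\to(n,F)$ per level and summing $\Theta(\log n)$ terms gives the first part of the theorem, while for $c>r-1$ the geometric series in $\ell$ (now with ratio strictly less than $1$) converges and yields the second part. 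The hard part will be verifying the polynomial comparison at the correct exponent $c$ for general, not-necessarily-connected $F$: the blow-up is clean for connected $F$ (any copy of $F$ in the replicated layout must live inside a single block), but for disconnected $F$ the components could be distributed across blocks, and one must either handle components individually or engineer a replication that forbids such cross-block arrangements. Carrying this argument through at the full exponent $c$ is the technical heart of the proof.
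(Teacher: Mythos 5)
The decomposition itself is the same as the paper's. The paper's Theorem~\ref{lognthm2} carries out exactly your level decomposition (using $r$-ary rather than binary refinement, which changes only constants), with the systems $\cI_t$ playing the role of your dyadic levels and $\cJ_t$ being precisely the boxes whose defining tuple is not ``covered'' by a coarser box --- your condition that two of the $r$ intervals are siblings. Your per-level count $O(r\cdot 2^{(r-1)\ell})$ matches the paper's $|\cJ_t|<r^{t(r-1)+1}$, and the geometric series split into $c=r-1$ versus $c>r-1$ is the same.

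The genuine gap is the ``polynomial comparison'' $z_\to(m,F)=O\bigl((m/n)^c\bigr)z_\to(n,F)$ and, specifically, the blow-up argument you propose for it. Since $c\ge r-1\ge 1$, producing $(n/m)^c$ replicas of an extremal $m$-vertex layout inside $[n]$ forces replicas to share vertices (there are only about $n/m$ pairwise disjoint slots), and sharing does create new copies of $F$, even for connected $F$. Here is a concrete failure at $r=2$. Let $F$ be the connected ordered path $v_1<v_2<v_3<v_4$ with edges $\{v_1,v_3\},\{v_2,v_3\},\{v_2,v_4\}$ (interval chromatic number $2$), and let $H$ be any $F$-free interval-$2$-chromatic graph on $[m]$ whose right part contains a vertex $b$ with two left-part neighbours $a<a'$. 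Duplicate each right-part vertex $b$ into $(b,1),\dots,(b,s)$ ordered consecutively, keep the left part fixed, and take the edges $\{a,(b,t)\}$ for each $\{a,b\}\in H$ and $t\in[s]$. On the four vertices $a<a'<(b,t)<(b,t')$ we then have edges $\{a,(b,t)\},\{a',(b,t)\},\{a',(b,t')\}$, which is a copy of $F$. The map back to the base $H$ is order-preserving but not injective, so ``any embedded copy of $F$ would collapse into a single replicated block'' is false: the collapsed image of $F$ can be a degenerate (non-$F$) graph that lives comfortably inside $H$.

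Fortunately the comparison lemma is not needed. The paper's Theorem~\ref{lognthm2} takes as its hypothesis a direct bound $w(B)\le A\ell^c$ on box weights; for an $F$-free host graph this is supplied by the upper bound $z_\to(r\ell,F)\le A\ell^c$ at the working exponent $c$, with $A$ a constant, and the lower bound $z_\to(n,F)=\Omega(n^c)$ is only used at the very end to rewrite $CAn^{r-1}\log n$ (or $CAn^c$) as $O(z_\to(n,F)\log n)$ (or $O(z_\to(n,F))$). You can repair your argument the same way: drop the blow-up lemma, substitute $z_\to(r\lceil n/2^\ell\rceil,F)=O\bigl((n/2^\ell)^c\bigr)$ directly into your level-sum, sum the series, and invoke $z_\to(n,F)=\Omega(n^c)$ once at the end.
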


We will give a short self-contained proof of Theorem~\ref{splitting1}, although it also follows quickly from our next result, which is the main new ingredient in this work. 

\begin{definition}
	An ordered $r$-graph $F$ is a {\em split hypergraph} if there is a partition of $V(F)$ into intervals $X_1<X_2<\dots<X_{r - 1}$ and there exists $i \in [r-1]$ such that  every edge of $F$ has two vertices in $X_i$ and one vertex in every $X_j$ for $j \ne i$.
\end{definition} For instance, every $r$-graph of interval chromatic number $r$ is a split hypergraph. We write $e(H)$ for the number of edges in a hypergraph $H$, $v(H) = \bigl|\bigcup_{e \in H} e\bigr|$ and $d(H) = e(H)/v(H)^{r - 1}$ for the codegree density of $H$. 

\begin{thm}\label{splitting}
	For every $r\geq 3$ there exists $c=c_r>0$ such that every ordered $r$-graph $H$ contains a split subgraph $G$  with $d(G) \geq c\,d(H)$.
\end{thm}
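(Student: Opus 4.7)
The approach is to induct on $r$: the base case $r=3$ is handled directly by a span-based dichotomy, and the inductive step $r\ge 4$ reduces to smaller $r$ via dyadic halving of $[n]$ combined with projection to a weighted $(r-1)$-graph.

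For $r=3$, for each edge $e=\{v_1<v_2<v_3\}$ the span $v_3-v_1$ equals $|\{t\in[n-1]:v_1\le t<v_3\}|$, and each such $t$ makes $e$ split under the partition $(X_1,X_2)=([1,t],(t,n])$ (as type $(2,1)$ or $(1,2)$). Setting $S=\sum_e(v_3(e)-v_1(e))$, I dichotomise: if $S\ge mn/17$ then averaging yields some threshold $t$ for which at least $m/17$ edges satisfy $v_1\le t<v_3$, giving $d(G)\ge d(H)/17$; otherwise Markov produces $\ge m/2$ edges of span $\le 2n/17$, and covering $[n]$ by $O(1)$ overlapping length-$(4n/17)$ intervals plus pigeonhole yields a sub-interval whose codegree density strictly exceeds $d(H)$, on which I recurse until the first case eventually triggers (which it must, since the density is bounded).

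For $r\ge 4$, assume the theorem---in its weighted form, to accommodate multiplicities arising from projection---is known for all smaller values. Partition $[n]$ into $L=[1,n/2]$, $R=(n/2,n]$ and classify each edge as fully in $L$, fully in $R$, or mixed with exactly $k\in[r-1]$ vertices in $L$. Pigeonhole among these $r+1$ classes yields a dominant one with $\ge m/(r+1)$ edges. If $H_L$ or $H_R$ dominates, recursion on that half is strictly profitable because $2^{r-1}/(r+1)>1$. If a mixed class with extreme $k\in\{1,r-1\}$ dominates, project to the heavy side to obtain a weighted $(r-1)$-graph $H'$ with $w(S)=|\{v:\{v\}\cup S\in H\}|$, apply the weighted inductive splitting theorem to get a split $(r-1)$-subgraph $G'$, and lift to a split $r$-subgraph by adjoining a random subset of the light side of size $v(G')$, using an averaging argument to preserve density up to a constant factor depending only on $r$.

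The main obstacle is the intermediate case $2\le k\le r-2$, where both halves contain $\ge 2$ edge-vertices, so that naive combination of two inductive splits produces an invalid structure with two doubled parts. To overcome this I will choose the halving point $p\in[n]$ adaptively rather than at $n/2$: using the identity $\sum_p |H_M(k,p)|=\sum_e g_k(e)$ with $g_k(e)=v_{k+1}(e)-v_k(e)$, either $\sum_e(g_1+g_{r-1})(e)$ is a constant fraction of $mn$---in which case some $p$ yields many edges of extreme type and I proceed as above---or else most edges have both end-gaps small, so Markov guarantees that for most edges the first two and the last two vertices are clumped into short sub-intervals. In that sub-case I contract each edge by removing one vertex from a clumped pair, apply the inductive splitting theorem to the resulting weighted $(r-1)$-graph (with a further factor-$(r-2)$ pigeonhole to force a convenient doubled index), and lift back to a valid $r$-split by refining one part with an interior threshold separating the removed vertex from its clump-partner. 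Iterating through these alternatives yields a constant $c_r$ expressible in terms of $c_{r-1}$ and finitely many combinatorial factors depending only on $r$.
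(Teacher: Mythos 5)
Your overall scheme --- induct on $r$, project down one uniformity, split the projection inductively, and lift back --- is the same as the paper's, and your $r=3$ base case via spans and your extreme-$k\in\{1,r-1\}$ inductive step are sound in spirit (modulo constant bookkeeping). The gap is in the intermediate case $2\le k\le r-2$. The step ``lift back to a valid $r$-split by refining one part with an interior threshold separating the removed vertex from its clump-partner'' fails: after the inductive $(r-1)$-split, putting $v_1(e)$ back into the part $X_1'$ that already contains $v_2(e)$ creates a second doubled part, and a \emph{fixed} threshold $t$ separates the clumped pair only for those edges $e$ with $v_1(e)\le t<v_2(e)$. When the end-gaps are uniformly small --- say $v_2(e)-v_1(e)\equiv 1$, which lies squarely in your ``end-gaps small'' sub-case --- any fixed $t$ captures only about $e(G')/|X_1'|$ of the edges of the split $(r-1)$-subgraph $G'$, so the lifted density is smaller than $d(G')$ by a factor of order $|X_1'|\cdot v(G')$, which grows with the ground set and cannot be absorbed into a constant $c_r$. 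The auxiliary ``factor-$(r-2)$ pigeonhole to force a convenient doubled index'' does not repair this, since the doubled index is an output of the inductive call, not something you get to choose.

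The paper sidesteps the intermediate case entirely. Before the induction on $r$ runs, it proves a separate lemma (Lemma~\ref{b}) by a recursion on the \emph{vertex count}: every weighted (convex geometric) $r$-graph contains a \emph{bipartite} subgraph --- one whose edges all have exactly one vertex inside a single interval $X$ --- with density loss at most $r^{5r}$. That lemma partitions into $r$ equal intervals rather than two: either some interval is hit exactly once by a constant fraction of the weight (done), or else every remaining edge has $\ge 2$ vertices in at least $\lfloor r/2\rfloor$ of the intervals and hence lives on at most about half the vertices, which strictly increases the codegree density and lets the recursion continue. With this bipartite reduction in hand, the induction on $r$ always faces your extreme-$k$ scenario, and the projection-and-lift argument goes through cleanly. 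To make your outline work, replace the two-interval end-gap dichotomy with this $r$-interval bipartite reduction (a sub-recursion on $n$ nested inside the induction on $r$); the intermediate case then never arises.
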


In the next section, we describe an application of Theorems \ref{splitting1} and \ref{splitting} to extremal problems for ordered $r$-graphs, which demonstrates that loss of the factor $\log n$ between $\ex_{\rightarrow}(n,F)$ and $z_{\rightarrow}(n,F)$ is sometimes necessary. This example will also reveal a discrepancy 
  between the extremal functions for an ordered $r$-graph in the ordered setting versus the convex geometric setting.

\subsection{Crossing paths}

A {\em tight $k$-path} is an $r$-graph whose edges have the form
$\{v_i,v_{i + 1},\dots,v_{i + r - 1}\}$ for $0 \leq i < k$. Typically, we list the vertices $v_0v_1\dots v_{k+r-2}$ in a tight $k$-path.
We consider ordered tight paths to which Theorem \ref{splitting} applies, and for which we obtain the exact ordered extremal function in a number of cases. We let $<$ denote the underlying ordering of the vertices of an ordered or convex geometric hypergraph.

\begin{definition} [Crossing paths] \label{defCP} An {\em $r$-uniform crossing $k$-path} $CP_k^r$ is a tight $k$-path $v_0v_1\dots v_{r+k-2}$
with the ordering
\vspace{-0.1in}
\begin{center}
\begin{tabular}{lp{5.8in}}
{\rm (i)} & $v_0 < v_1 < v_2 < \dots < v_{r-1}$,  \\
{\rm (ii)} &  $v_j  <  v_{j+r}  < v_{j + 2r}  <  \cdots <  v_{j+1}$ for $j<r-1$ and\\
{\rm (iii)} & $v_{r-1} < v_{2r-1} < v_{3r-1}<\cdots$.
\end{tabular}
\end{center}
\end{definition}

An example of an ordered $CP_5^2$  (Figure 1) and of a convex geometric  $CP_7^2$ and $CP_5^3$  (Figure 2) are shown below.

\begin{figure}[!ht]
	\begin{center}
		\includegraphics[width=3.5in]{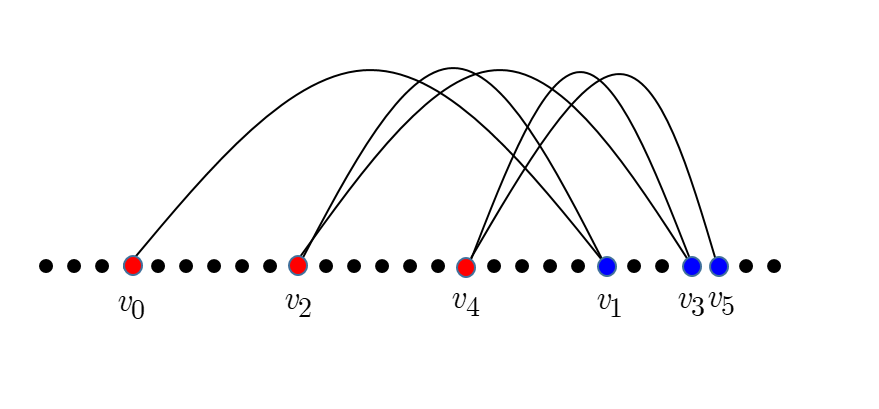}
		\caption{Ordered $CP_5^2$}
		\label{fig:pathcrossers}
	\end{center}
\end{figure}

\begin{figure}[!ht]
\begin{center}
\includegraphics[width=3.5in]{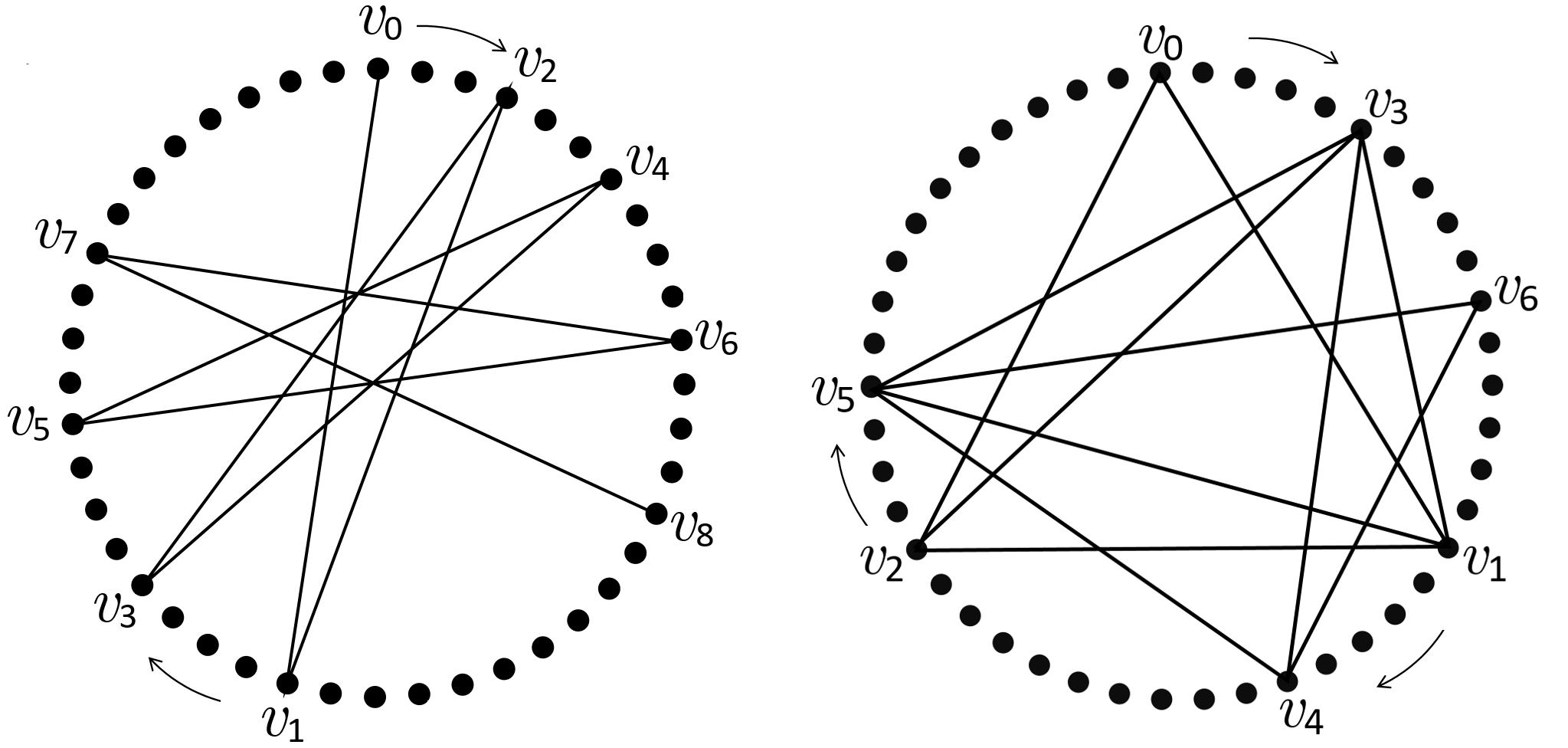}
\caption{Convex Geometric $CP_7^2$ and $CP_5^3$}
\label{fig:crossers}
\end{center}
\end{figure}

Our first result determines the order of magnitude of the extremal function for crossing paths in the ordered setting, and the exact extremal function for short crossing paths. We note that there are very few exact results known for ordered graphs or hypergraphs.

\begin{thm}\label{cpthm}
Let $k \geq 1$, $r \geq 2$ and $n\geq r+k$. Then
$$\ex_{\to}(n, CP^r_k)=  \begin{cases}
{n \choose r} - {n-k+1 \choose r} &\mbox{ for } k \leq r + 1  \\
\Theta(n^{r - 1}\log n) & \mbox{ for }k \geq r + 2.
\end{cases}$$

\end{thm}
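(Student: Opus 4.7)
The theorem splits into the exact regime $k\le r+1$ and the asymptotic regime $k\ge r+2$, which call for qualitatively different arguments. I would attack them in turn.

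For the exact regime, the lower bound requires an ordered $r$-graph on $[n]$ with exactly $\binom{n}{r}-\binom{n-k+1}{r}$ edges avoiding $CP_k^r$. The tempting ``all $r$-sets meeting $\{1,\dots,k-1\}$'' family actually contains $CP_k^r$: placing the $k-1$ leftmost CP vertices on $\{1,\dots,k-1\}$ forces every edge to meet that set. Instead I would adapt the fan triangulation of Capoyleas--Pach ($r=2$) and the higher-uniform construction of Aronov--Dujmovi\'c--Morin--Ooms--da Silveira. The case $k=r+1$ is especially clean: the first and last edges of $CP_{r+1}^r$ form a copy of the alternating $r$-matching $M_r$, so any $M_r$-free ordered $r$-graph is automatically $CP_{r+1}^r$-free and inherits the bound $\binom{n}{r}-\binom{n-r}{r}$ from the cited result. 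For smaller $k$ one would give analogous constructions whose CP-freeness is checked by a case analysis on where the leftmost CP vertices can sit.

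For the matching upper bound, a natural first attempt is induction on $n$: given $CP_k^r$-free $H$ on $[n]$, write $e(H)=d_H(1)+e(H[\{2,\dots,n\}])$, apply the induction hypothesis to the second summand, and aim for $d_H(1)\le \binom{n-1}{r-1}-\binom{n-k}{r-1}$, so that Pascal's identity $\binom{n-1}{r-1}-\binom{n-k}{r-1}+\binom{n-1}{r}-\binom{n-k}{r}=\binom{n}{r}-\binom{n-k+1}{r}$ closes the bound. However, the clean statement ``the link $L_1$ is $CP_k^{r-1}$-free'' fails in general, because the ``prepend $1$'' map from $(r-1)$-edges of $L_1$ produces $k$ edges all sharing vertex $1$, which is incompatible with the CP ordering whenever $k$ is close to $r+1$ (since a tight $(r+1)$-path has no common vertex). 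A refined approach is needed: for $k=r+1$ the bound should follow from a direct argument using the structure of $CP_{r+1}^r$ (exploiting that its two extreme edges form an $M_r$), and for $k\le r$ one likely needs a shifting argument or an averaging over pivot vertices.

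For the asymptotic regime $k\ge r+2$, I would invoke Theorem~\ref{splitting1} with $c=r-1$: once $z_{\rightarrow}(n, CP_k^r)=\Theta(n^{r-1})$ is established, the theorem immediately gives $\ex_{\rightarrow}(n, CP_k^r) = O(n^{r-1}\log n)$. The partite upper bound $z_{\rightarrow}(n,CP_k^r)=O(n^{r-1})$ can be proved by greedy extension in the $r$-partite setting with parts $A_1<\dots<A_r$: a $CP_k^r$ cycles through the parts, and at density $\Omega(n^{r-1})$ one can iteratively extend a partial tight path by pigeonhole on the link of each $(r-1)$-prefix until $k$ edges have been assembled. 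For the matching lower bound $\Omega(n^{r-1}\log n)$, I would use a recursive doubling construction: split $[n]$ into halves $L,R$, recurse on each, and glue with a transverse $r$-partite layer of $\Theta(n^{r-1})$ edges whose interval-chromatic structure forbids any long tight path bridging the halves. The principal obstacle throughout is the exact upper bound for $k\le r+1$, where naive inductive link arguments fail due to the CP ordering constraints, so one must find a structural argument exploiting the full CP pattern; the matching lower-bound construction is also delicate, requiring the Capoyleas--Pach/Aronov et al.\ machinery rather than a straightforward ``edges through a small set''.
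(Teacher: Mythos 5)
Your decomposition into the exact regime ($k\le r+1$) and the asymptotic regime ($k\ge r+2$) matches the paper, and your treatment of the asymptotic upper bound (via Theorem~\ref{splitting1} plus a greedy-extension bound $z_{\rightarrow}(n,CP_k^r)=O(n^{r-1})$) is essentially the paper's argument. But there are genuine gaps in the remaining three quadrants.

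The central missing idea is the exact upper bound for $k\le r+1$. You correctly observe that the naive link argument at vertex $1$ fails, but you then leave a placeholder (``a shifting argument or an averaging over pivot vertices''). The paper's Proposition~\ref{firstclaim} resolves this by working with the \emph{pair} $\{1,2\}$: split $G$ into $G_1$ (edges containing both $1$ and $2$), $G_2$ (edges containing $1$ but not $2$ whose image under $1\mapsto 2$ is also an edge), and $G_3$ (the rest, with vertices $1,2$ glued into a single new vertex $2'$). One checks that $G_3$ on $n-1$ vertices is still $CP_k^r$-free (an edge through $2'$ can only be the first edge of a crossing path, so it can be ``unglued''), that deleting vertex $1$ from each edge of $G_2$ yields a $CP_{k-1}^{r-1}$-free $(r-1)$-graph on $\{3,\dots,n\}$ (a $CP_{k-1}^{r-1}$ there would lift to a $CP_k^r$ by prepending $1$ to the first edge and shifting the rest via $2$), and that $|G_1|\le\binom{n-2}{r-2}$. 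This gives the recurrence
\[
\ex_{\rightarrow}(n, CP^r_k)\leq \binom{n-2}{r-2}+\ex_{\rightarrow}(n-2, CP^{r-1}_{k-1})+\ex_{\rightarrow}(n-1, CP^r_k),
\]
which closes by a clean double induction on $k+n$ and Pascal's identity. Nothing in your outline reaches this; your proposed routes (a special-case argument for $k=r+1$ via $CM^r$, or shifting for $k\le r$) are not developed and for $k=r+1$ are circular, since the exact value $\binom{n}{r}-\binom{n-r}{r}$ for the crossing matching is Corollary~\ref{cmcor} \emph{of this theorem}, not an independently available input --- the cited Aronov et al.\ result only gives $\Theta(n^2)$ for $r=3$.

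The exact lower bound is also unspecified in your proposal. The paper's construction $G(n,r,k)$ is concrete: all increasing $r$-tuples $(a_1,\dots,a_r)$ such that $a_{i+1}=a_i+1$ for some $i\le k-1$ (with an extra slice $a_r=n$ when $k=r+1$); one verifies $CP_k^r$-freeness directly from the interleaving pattern (\ref{j14}) and counts the complement via a bijection. Your remark about the ``all $r$-sets meeting $\{1,\dots,k-1\}$'' family is also not quite right as stated: for $k=2$ that family (a star) is in fact $CP_2^r$-free, so the claimed counterexample does not go through uniformly. Finally, the asymptotic lower bound: the paper uses a single explicit construction (tuples where $a_2-a_1$ is a power of two), and the verification that it avoids $CP_{r+2}^r$ is a pointed analysis of three differences that cannot all be powers of two. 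Your recursive-halving proposal is plausible in spirit but is not the same object, and you give no argument that a long crossing path cannot be built inside one half or across two adjacent dyadic blocks; that verification is the actual content of the lower bound.
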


Theorem \ref{cpthm} for $k \geq r + 2$ shows that the $\log n$ factor in Theorem \ref{splitting} is necessary, as we shall see for all $k,r \geq 2$ that $z_{\rightarrow}(n,CP_k^r) = O(n^{r-1})$.

In the convex geometric setting, Bra{\ss}, K\'{a}rolyi and Valtr~\cite{Brass-Karolyi-Valtr} proved that $\ex_{\cir}(n,CP_3^2) = 2n - 3$ for $n \geq 3$. 
We generalize this to $CP_k^r$ for $r>2$ and $k>3$  in the following theorem:

\begin{thm}\label{cgthm} 
Let $k \geq 1$, $r \geq 2$ and $n \ge 2r+1$. Then 
  $$ \ex_{\circlearrowright}(n, CP^r_k) = \begin{cases}
  \Theta(n^{r - 1}) & \mbox{ for } k \leq 2r-1 \\
  {n \choose r} - {n - r \choose r} & \mbox{ for }k = r + 1 \\
  \Theta(n^{r - 1}\log n) & \mbox{ for }k \geq 2r.
  \end{cases}$$

  \end{thm}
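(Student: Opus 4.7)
The proof of Theorem~\ref{cgthm} treats three ranges of $k$, using the cut-to-linear reduction for most upper bounds and explicit constructions for the lower bounds.

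\textbf{Upper bounds.} Given a cgeom $r$-graph $H$ on $n$ cyclically ordered vertices, cut the cyclic order at any vertex to obtain an ordered $r$-graph $H'$ with the same edge set. Since any ordered copy of $CP_k^r$ inside $H'$ is automatically a cgeom copy of $CP_k^r$ inside $H$, we have
\[
\ex_{\cir}(n,CP_k^r)\le \ex_{\to}(n,CP_k^r).
\]
Theorem~\ref{cpthm} then immediately supplies the exact bound $\binom{n}{r}-\binom{n-r}{r}$ for $k=r+1$, the $O(n^{r-1})$ bound for $k\le r$, and the $O(n^{r-1}\log n)$ bound for $k\ge 2r$. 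Only the intermediate range $r+2\le k\le 2r-1$ is missed, where the ordered bound is too weak by a $\log n$ factor. For this range the plan is to show that the maximum $(r-1)$-codegree of $H$ is $O(1)$: if some $(r-1)$-subset $S$ had large codegree, one would find many pairs of edges sharing $S$ and, combining these with further high-codegree $(r-1)$-sets produced by rotations of the cyclic order, build the interleaved pattern of a cgeom $CP_k^r$ for any $k\le 2r-1$. Bounded $(r-1)$-codegree then yields $e(H)=O(n^{r-1})$ by standard edge counting.

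\textbf{Lower bounds.} For $k=r+1$, let $H^{\ast}$ be the cgeom $r$-graph on cyclic $[n]$ whose edges are all $r$-subsets that meet the arc $A=\{1,2,\dots,r\}$; then $|E(H^{\ast})|=\binom{n}{r}-\binom{n-r}{r}$. To see $H^{\ast}$ contains no cgeom $CP_{r+1}^r$, suppose for contradiction that $e_0,\dots,e_r$ form such a path: by Definition~\ref{defCP}, the edges $e_0$ and $e_r$ are disjoint and alternate in cyclic order, and each intermediate $e_i$ interleaves rigidly between them. Each of the $r+1$ edges must meet $A$, so pigeonhole forces some $a\in A$ into at least two path-edges; tracing $a$ through the prescribed interleaving shows that its position inside the $r$-arc $A$ is incompatible with the alternating positions forced on $e_0$ and $e_r$, a contradiction. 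The same $H^{\ast}$ yields $\Omega(n^{r-1})$ for $r+1\le k\le 2r-1$ via $CP_{r+1}^r\subseteq CP_k^r$; for $k\le r$ a shrunken arc of length $k-1$ suffices. For $k\ge 2r$ the $\Omega(n^{r-1}\log n)$ bound is obtained by embedding the ordered recursive construction that witnesses the lower bound in Theorem~\ref{cpthm} inside a proper arc of length $n-O(1)$ of the cycle, leaving a short gap: the hypothesis $k\ge 2r$ forces any cgeom $CP_k^r$ to contain two disjoint alternating $r$-pairs, and the gap prevents such a pair from wrapping across the cycle.

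\textbf{Main obstacle.} The hardest step is the upper bound $O(n^{r-1})$ in the intermediate range $r+2\le k\le 2r-1$, since the cut-to-ordered reduction delivers only $O(n^{r-1}\log n)$. Closing this $\log n$ factor is the concrete manifestation of the cgeom-versus-ordered discrepancy advertised in the introduction and requires genuine use of the cyclic symmetry, either via the codegree analysis sketched above or via Theorem~\ref{splitting} combined with a rotation argument. The verification that $H^{\ast}$ avoids cgeom $CP_{r+1}^r$, while elementary, also requires careful bookkeeping of the interleaved cyclic positions prescribed by Definition~\ref{defCP}.
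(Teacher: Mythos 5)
Your proposal gets the easy parts right (cutting the cycle at a vertex to reduce $\ex_{\cir}$ to $\ex_{\to}$ handles $k=r+1$ and $k\ge 2r$ from above) but the two hard steps are both flawed.

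\textbf{The codegree bound for the intermediate range is false.} You claim that a $CP_k^r$-free convex geometric $r$-graph with $r+2\le k\le 2r-1$ has maximum $(r-1)$-codegree $O(1)$. This fails badly: take $H$ to be all $r$-sets containing a fixed vertex $x$. Since the edges $e_0=\{v_0,\dots,v_{r-1}\}$ and $e_r=\{v_r,\dots,v_{2r-1}\}$ of $CP_{r+1}^r$ are disjoint, every copy of $CP_{r+1}^r$ (hence of $CP_k^r$ for $k\ge r+1$) contains two disjoint edges, so $H$ contains no $CP_k^r$ for any $k\ge r+1$. Yet $H$ has $\binom{n-1}{r-1}$ edges and the $(r-1)$-set codegree is $n-r+1$ for any $(r-1)$-set containing $x$. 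The paper's argument for this range is entirely different: it applies the splitting theorem (Theorem~\ref{splitter}) to pass to a split subgraph $G$ with $e(G)>k\,v(G)^{r-1}$, and then proves by induction on $k$ that such a $G$ contains a crossing $k$-path with vertices placed in prescribed parts, using a marking-and-deletion argument on the shadow to extend paths one edge at a time. There is no way to salvage a global codegree bound in its place.

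\textbf{The lower bound construction $H^{\ast}$ for $k=r+1$ does not work.} Taking all $r$-sets meeting a fixed $r$-arc $A$ does give $\binom{n}{r}-\binom{n-r}{r}$ edges, but $H^{\ast}$ contains $CM^r\subset CP_{r+1}^r$. For $r=2$, with $A=\{1,2\}$, the chords $\{1,3\}$ and $\{2,4\}$ both meet $A$ and cross; in fact $v_0=1,v_2=2,v_1=3,v_3=4$ gives a full cgeom $CP_3^2$ with edges $\{1,3\},\{3,2\},\{2,4\}$, all in $H^{\ast}$. For general $r$, pick $u_0=1,\,u_1=2$ and $u_2<u_3<\dots<u_{2r-1}$ all outside $A$; then $\{u_0,u_2,\dots,u_{2r-2}\}$ and $\{u_1,u_3,\dots,u_{2r-1}\}$ are a crossing matching in $H^{\ast}$, so $H^{\ast}$ contains $CP_{r+1}^r$. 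The paper's lower bound instead uses the ordered construction $G(n,r,r+1)$ (r-sets with two consecutive elements, or containing $n$), read cyclically, which avoids $CM^r$; combined with $CM^r\subseteq CP_{r+1}^r$ and the ordered upper bound this yields the exact value. For $k\le 2r-1$ the paper just uses $\ex_{\cir}(n,CP_k^r)\ge \ex(n,P_k^r)=\Omega(n^{r-1})$, and for $k\ge 2r$ it uses the same $G(n,r,r+2)$ in cyclic order with a pigeonhole rotation argument, not the arc-with-a-gap idea you sketch.
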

  
This reveals a discrepancy between the ordered setting and the convex geometric setting: in the convex geometric setting, crossing paths of length up to $2r - 1$ have extremal function of order $n^{r - 1}$, whereas this phenomenon only occurs for crossing paths of length up to $r + 1$ in the ordered setting. In fact, we know  that  
$\ex_{\circlearrowright}(n, CP^r_k)=\ex_{\to}(n, CP^r_k)$ iff $k \in \{1, r+1\}$. The proofs of Theorems \ref{cpthm} and \ref{cgthm} rely substantially on Theorems~\ref{splitting1} and \ref{splitting}.

Theorem \ref{cpthm} has a simple corollary for crossing matchings: a {\em crossing matching} $CM^r$ consists of two disjoint $r$-sets $\{v_0,v_2,v_4,\dots,v_{2r-2}\}$ and $\{v_1,v_3,\dots,v_{2r-1}\}$ such that $v_0 < v_1 < v_2 < \dots < v_{2r-1}$. In this way, Theorem \ref{cpthm} could be viewed as an ordered version of the Erd\H{o}s-Ko-Rado Theorem. Aronov, Dujmovi\v{c}, Morin, Ooms and da Silveira~\cite{Aronov} showed that $\ex_{\circlearrowright}(n,CM^3) = \Theta(n^{2})$ and Capoyleas and Pach~\cite{Capoyleas-Pach} proved an exact result for the convext geometric graph comprising $k$ pairwise crossing line segments. Starting with the simple observation that the ordered crossing path $CP_{r+1}^r$ contains $CM^r$, and also that $\ex_{\rightarrow}(n, CM^r) = \ex_{\circlearrowright}(n,CM^r)$, we
obtain the following corollary to Theorem \ref{cpthm} for $k = r + 1$:

\begin{cor}\label{cmcor}
For $n>r>1$, $ \ex_{\rightarrow}(n, CM^r) = \ex_{\circlearrowright}(n,CM^r) = {n \choose r} - {n-r \choose r}.$
\end{cor}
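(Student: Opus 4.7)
My plan is to combine the $k=r+1$ cases of Theorems~\ref{cpthm} and~\ref{cgthm} with two simple observations: the containment $CM^r\subseteq CP^r_{r+1}$ and the equivalence of the linear and cyclic notions of $CM^r$. Unpacking Definition~\ref{defCP}, the vertices of $CP^r_{r+1}$ are linearly ordered as $v_0<v_r<v_1<v_{r+1}<v_2<v_{r+2}<\cdots<v_{r-1}<v_{2r-1}$, so the first edge $\{v_0,v_1,\ldots,v_{r-1}\}$ occupies the odd positions of this order while the last edge $\{v_r,v_{r+1},\ldots,v_{2r-1}\}$ occupies the even positions; these two disjoint interleaving edges form a copy of $CM^r$. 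Consequently, any $CM^r$-free ordered (resp.\ convex geometric) $r$-graph is also $CP^r_{r+1}$-free, so Theorem~\ref{cpthm} (resp.\ Theorem~\ref{cgthm}) yields the upper bound
\[
\ex_{\to}(n,CM^r) \le \binom{n}{r}-\binom{n-r}{r} \quad \text{and} \quad \ex_{\cir}(n,CM^r) \le \binom{n}{r}-\binom{n-r}{r}.
\]

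Next I would verify the equivalence $\ex_{\to}(n,CM^r)=\ex_{\cir}(n,CM^r)$. For any $r$-graph $H$ on $[n]$, a cyclic $CM^r$ and a linear $CM^r$ are the same configuration: if two disjoint $r$-sets $A,B\in H$ cyclically alternate and $A\cup B=\{x_1<x_2<\cdots<x_{2r}\}$, the cyclic alternation on $A\cup B$ forces $\{A,B\}=\bigl\{\{x_1,x_3,\ldots,x_{2r-1}\},\{x_2,x_4,\ldots,x_{2r}\}\bigr\}$, which is a linear $CM^r$; conversely, a linear alternation with $|A|=|B|$ is automatically cyclic. Thus an $r$-graph on $[n]$ is linearly $CM^r$-free iff it is cyclically $CM^r$-free, and their extremal numbers agree.

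Finally, for the matching lower bound I would exhibit an $r$-graph on $[n]$ with $\binom{n}{r}-\binom{n-r}{r}$ edges containing no $CM^r$. The plan is to take the extremal construction from the proof of Theorem~\ref{cpthm} (or Theorem~\ref{cgthm}) at $k=r+1$---a $CP^r_{r+1}$-free family of this size---and verify that the same family is already $CM^r$-free. The main obstacle is precisely this verification: the most obvious candidate, all $r$-sets meeting $\{1,\ldots,r\}$, has the correct cardinality but contains the $CM^r$ formed by $\{1,3,\ldots,2r-1\}$ and $\{2,4,\ldots,2r\}$, so the genuine extremal construction must be a subtler family and checking $CM^r$-freeness is nontrivial. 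Once this is in hand, combining the three steps gives $\ex_{\to}(n,CM^r)=\ex_{\cir}(n,CM^r)=\binom{n}{r}-\binom{n-r}{r}$, which is the corollary. For the small cases $n<2r$ the claim is vacuous since $CM^r$ requires $2r$ vertices and $\binom{n-r}{r}=0$, and the boundary case $n=2r$ can be handled by direct inspection (there is a unique $CM^r$ on $[2r]$, and removing either of its two edges gives an extremal family of size $\binom{2r}{r}-1$).
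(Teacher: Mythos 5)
Your overall strategy is exactly the one the paper has in mind: deduce the upper bound from $CM^r\subseteq CP^r_{r+1}$ and Theorem~\ref{cpthm}, note that ordered and cyclic $CM^r$-freeness coincide, and reuse the extremal construction from Theorem~\ref{cpthm} for the lower bound. The first two steps are correct as written. But you explicitly flag the lower bound as unfinished --- you say the ``main obstacle'' is checking $CM^r$-freeness of the extremal family and ``once this is in hand'' the proof goes through --- so the proposal has a genuine gap where the paper has content. The family in question is $G(n,r,r+1)$: all $r$-sets of $[n]$ that either contain two consecutive integers or contain $n$; the paper already counts $|G(n,r,r+1)|=\binom{n}{r}-\binom{n-r}{r}$.

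The missing verification is in fact short. Suppose $A=\{v_0,v_2,\dots,v_{2r-2}\}$ and $B=\{v_1,v_3,\dots,v_{2r-1}\}$ with $v_0<v_1<\cdots<v_{2r-1}$ form a $CM^r$ inside $G(n,r,r+1)$. If $A$ contained two consecutive integers $v_{2i}<v_{2j}=v_{2i}+1$ (so $i<j$), then $v_{2i}<v_{2i+1}<v_{2j}=v_{2i}+1$ would force the integer $v_{2i+1}$ strictly between two consecutive integers, a contradiction; and $A$ cannot contain $n$ since $v_{2r-2}<v_{2r-1}\le n$. So $A\notin G(n,r,r+1)$, a contradiction, and $G(n,r,r+1)$ is $CM^r$-free. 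With this filled in, the proof is complete and coincides with the paper's. (Your side remark about the naive construction ``all $r$-sets meeting $[r]$'' is a fair observation --- it does have the right size but does contain $CM^r$ --- which is precisely why the paper uses $G(n,r,r+1)$ rather than the star-like family.)
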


We shall see that the same convex geometric $r$-graph which does not contain $CP_{r+1}^r$ used to prove Theorem \ref{cpthm} also does not contain $CM^r$, which establishes the equality in the corollary.

\section{Proof of Theorem~\ref{splitting1}}

In this section,  we suppose that the underlying set (the set of vertices) of an ordered hypergraph is $[n]$.
An {\em interval} is a set of consecutive vertices in the ordering.
Given a set of intervals $I_1< I_2< \dots < I_r$
a {\em box} $B(I_1, \dots, ,I_r)$ is a set of (ordered) $r$-sets $\{ x_1, x_2, \dots, x_r\}$ such that $x_i\in I_i$.
We say that a box  $B(I_1, \dots, ,I_r)$ is  {\em covered by} (or contained in) the box $B(J_1, \dots, , J_r)$ if $I_t\subseteq J_t$ for all $t\in [r]$.
A {\em weighted $r$-uniform hypergraph} on a set $X$ is a function $\omega : {X \choose r} \rightarrow [0,\infty)$. 
For a family $\cF$, let
$w(\cF):= \sum_{F\in \cF} w(F)$.  Theorem~\ref{splitting1} follows from the following more general result.

\begin{thm} \label{lognthm2} Let $r\ge c\ge r-1\ge 1$ and let $\omega : {[n] \choose r} \rightarrow [0,\infty)$ be
a weighted $r$-uniform hypergraph.
	Suppose that there is some $A>0$ such that $w(B)\leq A \ell^c$
	for every box $B(I_1, \dots, ,I_r)$
	with $|I_1|=\dots =|I_r|=\ell$. 
	Then	
	 \[w\left(\binom{[n]}{r}\right)  < \left\{\begin{array}{ll}
C A  n^{r-1} \log n	& \mbox{ if } c=r-1 \\
 C  A n^c & \mbox{ if }c>r-1,
	\end{array}\right.\]
	where the  $C$ depends only on $r$ in the first case and only  on $r$ and $c$ in the second case.
\end{thm}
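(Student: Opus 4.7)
The plan is a dyadic decomposition together with a ``finest separating scale'' assignment. Assume for simplicity that $n$ is a power of $2$ (otherwise pad by zero-weight vertices). For $0 \le k \le K := \log_2 n$, let $P_k$ be the partition of $[n]$ into $n/2^k$ consecutive intervals of length $2^k$, and call two intervals of $P_{k-1}$ \emph{siblings} if they share a common parent in $P_k$. For each $r$-set $T = \{x_1 < \cdots < x_r\}$ define its \emph{critical scale} $k(T) \in \{1, \ldots, K\}$ to be the smallest $k$ for which two of the $x_i$ lie in a common interval of $P_k$. At scale $k(T)-1$ the elements occupy $r$ distinct intervals $I_1 < \cdots < I_r$ of $P_{k(T)-1}$, and by minimality of $k(T)$ at least one consecutive pair $(I_j, I_{j+1})$ among these must be siblings (because at level $k(T)$ two neighbouring $x_i$'s must merge). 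Let $\mathcal{B}_k^\ast$ be the collection of boxes $B(I_1,\ldots,I_r)$ with all $I_l \in P_{k-1}$ and with at least one consecutive sibling pair.

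The central identity I would establish is
\[
w\Bigl(\binom{[n]}{r}\Bigr) \;=\; \sum_{k=1}^{K}\, \sum_{B \in \mathcal{B}_k^\ast} w(B),
\]
proved by showing that the map sending $T$ to the unique box of $\mathcal{B}_{k(T)}^\ast$ at level $k(T)-1$ containing $T$ gives an honest partition of $\binom{[n]}{r}$. One direction is immediate from the construction; for the converse one checks that if $T' \in B \in \mathcal{B}_k^\ast$, then $k(T') = k$, since the $r$ distinct level-$(k-1)$ intervals of $B$ force $k(T') \ge k$, while the sibling pair inside $B$ forces $k(T') \le k$. This bookkeeping step is the main technical subtlety, and is exactly what the ``consecutive sibling pair'' clause in the definition of $\mathcal{B}_k^\ast$ is designed to enforce; once it is in place, the remaining calculations are routine.

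Having the identity, and writing $N := n/2^{k-1}$, I count: each box of $\mathcal{B}_k^\ast$ is specified by a sibling pair (at most $N/2$ choices), a position $j \in [r-1]$ into which it is inserted, and $r-2$ further intervals of $P_{k-1}$, giving $|\mathcal{B}_k^\ast| \le C_r N^{r-1}$ for a constant $C_r$ depending only on $r$. Each such box consists of intervals of common length $2^{k-1}$, so the hypothesis yields $w(B) \le A(2^{k-1})^c$, and the scale-$k$ contribution is at most $C_r A n^{r-1}(2^{k-1})^{c-r+1}$. When $c = r-1$ the exponent vanishes, every scale contributes $C_r A n^{r-1}$, and summing over $k = 1,\dots, K$ gives $O(An^{r-1}\log n)$. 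When $c > r-1$ the geometric series is dominated by its top term at $k \sim K$, yielding $O(An^{r-1}\cdot n^{c-r+1}) = O(An^c)$, completing both cases.
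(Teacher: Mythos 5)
Your proof is correct and follows essentially the same multi-scale box-decomposition strategy as the paper: you use a dyadic interval hierarchy with an explicit ``critical scale'' partition of $\binom{[n]}{r}$, while the paper uses an $r$-ary hierarchy and collects at each level the boxes not covered by a coarser box, but these are two phrasings of the same idea and yield the same scale-by-scale count $O(N^{r-1})$ and the same geometric-series summation. The difference is purely cosmetic.
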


\proof 
Since the statement is monotone, to avoid ceilings and floors, for easier presentation we suppose that $n=r^g$ for some integer $g\geq 1$.
Define a system of intervals $\cI_1, \dots, \cI_g$ and systems of boxes $\cJ_1, \dots, \cJ_{g}$ as follows.
The system $\cI_t$ is obtained by splitting $[n]$ into $r^t$ equal intervals. So $|\cI_t|= r^t$ and each member of it has length $n/r^t$.
For any family of (disjoint) intervals $\cI$, let $B^r(\cI)$ (or just $B(\cI)$) denote the family of boxes of dimension $r$ with intervals from $\cI$.
The family $\cJ_1$ consists of a single box, $\cJ_1:=B(\cI_1)$.
For $t>1$, let $\cJ_t$ be the set of boxes from $B(\cI_t)$ that are {\em not} covered by any member of $B(\cI_{t-1})$.
Since $B(\cI_g)=\binom{[n]}{r}$,  the boxes $\cJ_1\cup \dots \cup \cJ_q$ cover the whole hypergraph.

By definition, $|\cJ_1|=1$. For $t>1$ we can give a (generous) upper bound for the size of $|\cJ_t|$ as follows:
The $r$ intervals from $\cI_t$ defining a member of $\cJ_t$ cannot be spread out into $r$ intervals of $\cI_{t-1}$.
So first, select two subintervals of a member of $\cI_{t-1}$ and then arbitrarily other $(r-2)$ members of $\cI_t$.
One can do this in at most
\[   |\cJ_t|   \leq |\cI_{t-1}|\binom{r}{2} \binom{r^t}{r-2} < r^t\times r^2\times r^{t(r-2)}= r^{t(r-1)+1}
\]
different ways.
The weight of each box from $\cJ_t$ is bounded above by $A(n/r^t)^c$. Hence
\[ \sum w(\cJ_t) \leq \sum_{1\leq t\leq g} (Arn^c) r^{t(r-1-c)}.
\qed
\]

\section{Proof of Theorem~\ref{splitting}}

Throughout this section, $H$ is a convex geometric $n$-vertex $r$-graph, with cyclic ordering $<$ on the vertices. A subgraph $G$ of $H$ is a {\em split subgraph} if there exists a partition of $V(G)$ into cyclic intervals $X_1, X_2, \dots, X_{r - 1}$ such that for some $i \in [r - 1]$, every edge $e$ of $G$ has two vertices in $X_i$ and one vertex in every $X_j : j \neq i$. Let $v(H) = \bigl|\bigcup_{e \in H} e\bigr|$ and $d(H) = e(H)/v(H)^{r - 1}$ denote the codegree density of $H$. Our goal is to prove the following Theorem.

\begin{thm}\label{splitter}
For every $r\geq 3$ there exists $c \ge r^{-5r^2}$ such that every convex geometric $r$-graph $H$ contains a split subgraph $G$  with $d(G) \geq c\,d(H)$.
\end{thm}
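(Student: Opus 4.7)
The plan is to construct the required split subhypergraph $G$ by iteratively refining a partition of $V(H)$ into cyclic intervals, preserving a substantial fraction of the edges at each step. I maintain pairs $(\mathcal{P}_k, H_k)$ for $k=1,2,\ldots,r-1$, where $\mathcal{P}_k=(X_1^{(k)},\ldots,X_k^{(k)})$ partitions $V(H)$ into $k$ cyclic intervals and $H_k\subseteq H$ is a subhypergraph every edge of which meets each part of $\mathcal{P}_k$ in at least one vertex. I start from the trivial $\mathcal{P}_1=(V(H))$, $H_1=H$, and perform $r-2$ refinement steps. When $k=r-1$, each edge of $H_{r-1}$ has $r$ vertices distributed with at least one in each of the $r-1$ parts, so exactly one part is doubled; pigeonhole on the doubled part yields a split subhypergraph $G$ with $e(G)\geq e(H_{r-1})/(r-1)$, and since $v(G)\leq v(H)=n$ the required density bound follows once I control the edge loss.

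A refinement step $(\mathcal{P}_k,H_k)\to(\mathcal{P}_{k+1},H_{k+1})$ splits some part $X_i$ of $\mathcal{P}_k$ into two cyclic sub-intervals $X_i',X_i''$, keeping only those edges of $H_k$ with a vertex in each half. The choice of $X_i$ is guided by averaging: since $\sum_{i,\,e\in H_k}(|e\cap X_i^{(k)}|-1)=(r-k)\,e(H_k)$ and each summand is at most $r-k$, some part $X_i$ has at least $e(H_k)/k$ edges with $\geq 2$ vertices in it (the \emph{splittable} edges at $X_i$). The choice of split point inside $X_i$ comes from sub-partitioning $X_i$ into $r$ equal sub-intervals $Y_1,\ldots,Y_r$ and applying pigeonhole: either a positive fraction of splittable edges have their $X_i$-vertices spanning at least two of the $Y_t$'s, in which case pigeonhole on the boundary they straddle yields one split point preserving a $1/r$-fraction of these edges; or a positive fraction have all their $X_i$-vertices in a single $Y_t$, in which case I restrict $H_k$ to these edges, replace $X_i$ by $Y_t$ in the partition, and repeat the inner sub-partitioning with the smaller active interval.

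The main obstacle will be bounding the depth of this inner cascade. A priori, successive localizations could shrink the active sub-interval by a factor $r$ for up to $\log_r n$ rounds, producing an $n$-dependent density loss incompatible with a uniform constant $c_r$. My plan to obtain the bound $c\geq r^{-5r^2}$ is to cap the cascade at $O(r)$ rounds by interrupting it once the active interval becomes small: at that point the splittable edges are forced into a very narrow region, and I extract a split substructure inside the region directly, for instance via an application of the weighted box bound of Theorem~\ref{lognthm2} in place of further refinement. Granting this cap, each outer refinement costs a factor of at most $r^{O(r)}$ (one factor from the choice of $X_i$, one from the sub-interval pigeonhole, and $O(r)$ inner cascade factors of $r$), so the $r-2$ outer refinements compose to a total loss of $r^{O(r^2)}$. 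Tuning the absolute constants in this $O(r^2)$ exponent then yields $d(G)\geq r^{-5r^2}\,d(H)$ as claimed.
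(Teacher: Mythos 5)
Your framework diverges from the paper's, and it has a genuine gap which you yourself flag but do not repair. You keep the ambient vertex set fixed, so the density denominator $v(G)^{r-1}\approx n^{r-1}$ never shrinks and the entire burden falls on showing $e(G)\geq e(H)/r^{5r^2}$, i.e.\ on controlling edge loss to a uniform constant factor. The ``inner cascade'' then kills this: each time a $(1-\alpha)$-fraction of the splittable edges land inside a single sub-interval $Y_t$, the pigeonhole over which $Y_t$ costs a factor of about $r$, and nothing forces $\alpha$ to be bounded below, so the cascade can legitimately run for $\Theta(\log_r n)$ rounds and eat a polynomial-in-$n$ factor. Your proposed remedy --- cap the cascade at $O(r)$ rounds and, once the interval is ``narrow,'' extract a split structure directly via Theorem~\ref{lognthm2} --- does not work as stated: after $O(r)$ rounds the active interval still has size $n/r^{O(r)}$, which is not small, and Theorem~\ref{lognthm2} is an upper bound on the total weight of a hypergraph with bounded box weights, not a tool for producing a dense split sub-hypergraph. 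So the step that matters is exactly the one left unjustified.

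The paper sidesteps the cascade entirely by changing what gets recursed on. Lemma~\ref{b} runs an induction on $v(\omega)$: partition into $r$ roughly-equal intervals; if some interval $X_j$ carries a $1/r^{5r}$-fraction of weight among edges meeting it once, stop; otherwise nearly all the weight sits on edges that meet each part in $0$ or $\geq 2$ vertices, hence lie in at most $\lfloor r/2\rfloor$ of the parts. Pigeonholing over $\lfloor r/2\rfloor$-subsets $S$ and restricting to $\omega_S$ cuts the \emph{spanned} vertex set roughly in half while losing only a $\binom{r}{\lfloor r/2\rfloor}$-factor of weight, so $d(\omega_S)\geq d(\omega)$: the recursion does not degrade density at all, and the only loss is the one-time $r^{5r}$ factor at termination. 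Theorem~\ref{weightedversion} then inducts on $r$ by finding a bipartite piece with a distinguished interval $X$ meeting every edge exactly once, projecting the weight onto an $(r-1)$-graph on $V\setminus X$, applying the induction hypothesis, and randomly lifting back. The crucial feature you are missing is that the ``concentration'' event (many vertices of an edge in few parts) is turned into a \emph{shrinkage of $v(G)$}, so the denominator $v(G)^{r-1}$ drops in lockstep with the numerator. In your refinement scheme, concentration shrinks only the active interval, not the span of the surviving edges, so you get no compensation in the denominator and the $\log n$-deep cascade is fatal. To salvage your outline you would need to recurse on the induced hypergraph spanned by the surviving edges and track $d$ relative to that span --- at which point you are essentially reconstructing the paper's Lemma~\ref{b}.
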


We make no attempt to determine the optimal value of the constant $c$ in this theorem; it is not hard to show that $c = e^{-\Omega(r)}$.
It is straightforward to derive Theorem \ref{splitting} from this theorem.

\subsection{Weighted hypergraphs}

The proof of Theorem \ref{splitter} is inductive, and for the induction to work, we appeal to weighted hypergraphs
defined in the previous section.
 The $r$-sets of positive weight form a
hypergraph on $X$ which we denote by $H(\omega)$, and we let $V(\omega)$ be the union of all edges in $H(\omega)$ and
\[ |\omega| = \sum_{e \in H(\omega)} \omega(e).\]
We may think of $V(\omega)$ as the {\em vertex set} of $H(\omega)$, and we let $v(\omega) = |V(\omega)|$. When the range of $\omega$ is $\{0,1\}$, then $|\omega| = |H(\omega)|$ is the number of edges in $H(\omega)$.
Furthermore, for any $r$-graph $H$ on $X$, if $\omega(e) = 1$ if $e \in H$ and $\omega(e) = 0$ otherwise, then $H(\omega) = H$, so any hypergraph can be realized as a weighted hypergraph. The {\em codegree density} of $\omega$ is defined by \begin{equation}\label{wdef}
d(\omega) = \frac{|\omega|}{v(\omega)^{r - 1}}.
\end{equation}
If $G$ is a subgraph of $H(\omega)$, let $\omega_G$ be defined by $\omega_G(e) = \omega(e)$ for $e \in G$ and
$\omega_G(e) = 0$ otherwise. This is the {\em restriction} of $\omega$ to $G$. Note that if $\omega : X \rightarrow \{0,1\}$, then the codegree density of $\omega$
is exactly the codegree density of $H(\omega)$. We obtain Theorem \ref{splitting} for an ordered $r$-graph $H$ by defining $\omega(e) = 1$ for $e \in E(H)$ and $\omega(e) = 0$ otherwise.

\subsection{Bipartite subgraphs}

Let us say a convex geometric hypergraph $H$ is {\em bipartite} if there exists an interval $X$ such that every edge of $H$ has exactly one vertex in $X$. We first prove
a lemma on bipartite subgraphs of convex geometric $r$-graphs, and then use the lemma to commence a proof of a weighted generalization of Theorem \ref{splitter} by induction on $r$.

\begin{lem}\label{b}
Let $r \geq 3$ and let $\omega$ be a weighted convex geometric $r$-graph. Then there exists a bipartite $G \subseteq H(\omega)$ such that
$d(\omega_G) \geq d(\omega)/r^{5r}$.
\end{lem}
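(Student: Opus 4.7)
The plan is to find the interval $X$ via a pigeonhole argument on partition signatures, with iterative refinement to handle the hard case.

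Let $n = v(\omega)$. I would first partition the cyclic ordering of $V(\omega)$ into $r$ arcs $A_1,\dots,A_r$ of nearly equal size $\lfloor n/r\rfloor$ (possibly with a random rotation to avoid pathological alignments). For each edge $e \in H(\omega)$ I associate the signature $\sigma(e) = (|e\cap A_1|,\dots,|e\cap A_r|)$, a composition of $r$ into $r$ nonnegative parts. The number of such signatures is at most $\binom{2r-1}{r-1} < 4^r$, so pigeonhole on weight yields a signature $\sigma^*$ attained by a subfamily $\mathcal{E} \subseteq H(\omega)$ with $|\omega_\mathcal{E}| \geq |\omega|/4^r$.

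If $\sigma^*$ has some coordinate $\sigma^*_i = 1$, then every edge in $\mathcal{E}$ has exactly one vertex in $A_i$, so $\mathcal{E}$ is a bipartite sub-hypergraph with interval $X = A_i$. Since $v(\omega_\mathcal{E}) \leq v(\omega)$, this gives $d(\omega_\mathcal{E}) \geq d(\omega)/4^r$, well within the claimed bound $d(\omega)/r^{5r}$.

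The delicate case is when $\sigma^*$ has no coordinate equal to $1$, so every nonzero coordinate is at least $2$. Then the edges in $\mathcal{E}$ have all their vertices clustered in at most $\lfloor r/2\rfloor$ arcs. I would handle this iteratively: subdivide each heavy arc $A_i$ (with $\sigma^*_i \geq 2$) into finer sub-arcs and reapply the pigeonhole argument to $\mathcal{E}$ within the refined partition. With a suitably randomized subdivision, a constant fraction of the edges in $\mathcal{E}$ should split favorably across the sub-arcs, and iterating this procedure must eventually produce a refined signature with a $1$-entry, since each successful refinement reduces the maximum coordinate of the current signature.

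The main obstacle is tracking the density loss through these iterations, especially for edges whose vertices are tightly packed in $V(\omega)$ and resist simple splitting. The key accounting is that each refinement step loses at most a factor of $O(4^r)$ in density, and at most $O(\log r)$ iterations are needed before a coordinate equal to $1$ must appear, so the cumulative loss is bounded by $4^{O(r\log r)} \leq r^{5r}$ for appropriate constants. Throughout, the inequality $v(\omega_G)\leq v(\omega)$ keeps the denominator of the codegree density under control, which is what allows the pigeonhole bound on $|\omega_G|$ to translate directly into the desired bound on $d(\omega_G)$.
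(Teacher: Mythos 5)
Your approach shares the paper's basic strategy -- partition the cyclic vertex set into $r$ arcs, pigeonhole on how each edge meets the arcs, and treat the ``hard case'' where a heavy subfamily of edges is clustered in few arcs by recursing -- but the bookkeeping in your hard case has a real gap, and the missing ingredient is precisely the point of the paper's argument.

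The problem is your claim that ``at most $O(\log r)$ iterations are needed before a coordinate equal to $1$ must appear,'' with a loss of $O(4^r)$ per iteration. Consider $\omega$ whose support lies entirely inside a single short interval of length, say, $2r$, while $v(\omega)$ is huge (say the vertices in the support are consecutive). After your first pigeonhole, the dominant signature is $(r,0,\dots,0)$, and subdividing $A_1$ into any bounded number of sub-arcs leaves all edges in one sub-arc again; you make no progress on the signature, and you would have to keep subdividing $\Theta(\log n)$ times before the sub-arcs become comparable to the edge cluster. Losing a factor of order $4^r$ (or even $2^r$) at each of $\Theta(\log v(\omega))$ steps gives a cumulative loss that grows with $v(\omega)$, which destroys the claimed bound $d(\omega)/r^{5r}$. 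Randomizing the partition does not help here: the edges are adversarially packed, not adversarially aligned.

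The fix, and the heart of the paper's proof, is to pass to the support after each pigeonhole and to notice that this \emph{gains} in codegree density. Concretely, if every surviving edge puts $\geq 2$ vertices in each arc indexed by $S$ and hence avoids all arcs outside $S$ (with $|S| = \lfloor r/2 \rfloor$), then $v(\omega_S) \leq (v(\omega)+r)/2$, so the denominator $v(\cdot)^{r-1}$ drops by roughly $2^{r-1}$. The pigeonhole over $S$ costs only a factor $\binom{r}{\lfloor r/2 \rfloor} \approx 2^r/\sqrt{r}$ in weight, so the net effect on $d$ is a \emph{gain} for all $r\geq 3$ (the paper checks $d(\omega_S)\geq d(\omega)$ explicitly once $v(\omega) > r^5$). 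With density non-decreasing through the recursion, the only loss occurs in the base case $v(\omega)\leq r^5$, where a single $r$-set of maximum weight already gives $d(\omega_G) \geq d(\omega)/r^{5r}$. Your observation that $v(\omega_G) \leq v(\omega)$ keeps the denominator from blowing up, but it forfeits the crucial \emph{decrease} in $v$ that makes the recursion terminate without an $n$-dependent penalty; without that compensation, the density accounting does not close.
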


\begin{proof}
The proof of the lemma is by induction on $v(\omega)$. If $v(\omega) \leq r^5$, then  we can set $G$ to be an $r$-set of maximum weight.
To see this, note that $\omega_G(e) \geq |\omega|/{v(\omega) \choose r} \geq r! |\omega|/v(\omega)^r$.
Since $v(\omega_G) \leq v(\omega)$,
\[ d(\omega_G) = \frac{|\omega_G|}{v(\omega_G)^{r - 1}} \geq \frac{r! |\omega|}{v(\omega)^{2r-1}} \geq \frac{r!}{v(\omega)^r} \cdot d(\omega).\]
Finally, use the fact that $v(\omega) \leq r^5$ to get $d(\omega_G) \geq d(\omega)/r^{5r}$, as required.

 Suppose $v(\omega) > r^5$. Partition $V(H(\omega))$ into $r$ intervals $X_1,X_2,\dots,X_r$ such that $|X_1| \leq |X_2| \leq \dots \leq |X_r| \leq |X_1| + 1$. Let $H_j$ be the bipartite subgraph of all edges of $H$ with exactly one vertex in $X_j$ and put $\omega_j = \omega_{H_j}$. If $|\omega_j| \geq |\omega|/r^{5r}$ for some $j \in [r]$, then $d(\omega_j) \geq d(\omega)/r^{5r}$, and $G = H_j$ is the required bipartite subgraph. If $|\omega_j| < |\omega|/r^{5r}$ for all $j \in [r]$, let $F = H(\omega) \backslash \bigcup_{j = 1}^r H_j$. Then
\[
|\omega_F| \geq |\omega| - \sum_{j = 1}^r |\omega_j| > \Bigl(1 - \frac{1}{r^{5r-1}}\Bigr) \cdot |\omega| = c_r \cdot |\omega|.
\]For $S \subset [r]$ of size $\lfloor r/2 \rfloor$, let $\omega_S$ be the weighted hypergraph defined by $\omega_S(e) = \omega(e)$ if $|e \cap X_j| \geq 2$ for every $j \in S$, and $\omega_S(e) = 0$ otherwise.
By the pigeonhole principle,
\[|\omega_S| > \frac{|\omega_F|}{\phir} > \frac{c_r}{\phir} \cdot |\omega|
\]
for some $S \subset [r]$ of size $\lfloor r/2\rfloor$. Now every edge in $H(\omega_S)$ is disjoint from every $X_j : j \not \in S$, so
\[
v(\omega_S) \leq v(\omega) - \sum_{j \not \in S} |X_j| \leq v(\omega) - \sum_{j \not \in S} \Big\lfloor\frac{v(\omega)}{r} \Big\rfloor \leq \frac{v(\omega) + r}{2}.
\]
By induction, there is a bipartite $G \subseteq H(\omega_S)$ such that $d(\omega_G) \geq d(\omega_S)/r^{5r}$. Finally,
\begin{eqnarray*}
d(\omega_S) &=& \frac{|\omega_S|}{v(\omega_S)^{r-1}} \; \; \geq \; \;  \frac{c_r (2n)^{r - 1}}{\phir(v(\omega) + r)^{r-1}} \cdot d(\omega).
\end{eqnarray*}
It suffices to show that this is at least $d(\omega)$, and so $d(\omega_G) \geq d(\omega)/r^{5r}$.
To see this, let $n=v(\omega)$, note that $(1 + r/n)^{r - 1} \leq e^{1/r^3} \leq e^{1/27}$ when $n > r^5$, and therefore
\[ \phir(n + r)^{r - 1} \leq e^{1/27}\phir n^{r - 1}.\]
Next note $\phir \leq 2^{r}\sqrt{2}/\sqrt{\pi r} < 2^r \cdot \sqrt{2}/3$, and therefore
\[ \phir(n + r)^{r - 1} \leq \frac{\sqrt{8}}{3} \cdot e^{1/27} \cdot (2n)^{r - 1} < 0.99 (2n)^{r - 1}.\]
Now $c_r \geq c_3 > 1 - 3^{-14} > 0.99$ and so the proposition is proved.
\end{proof}

\subsection{Proof of Theorem \ref{splitter}}

Using Lemma \ref{b} and  induction on $r$, we prove the following generalization of Theorem \ref{splitter}. Set $\psi(r) = r^{5r^2}$.


\begin{thm}\label{weightedversion}
Let $r\geq 3$ and let $\omega$ be a weighted convex geometric $r$-graph. Then there exist a split $r$-graph $G \subseteq H(\omega)$ such that $d(\omega_G) \geq d(\omega)/\psi(r)$.
\end{thm}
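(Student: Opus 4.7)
The plan is to induct on $r$, using Lemma~\ref{b} at every step: the lemma produces a bipartite substructure, and then the inductive hypothesis is applied to the link $(r-1)$-graph on the complementary cyclic interval.

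For the base case $r=3$, a bipartite convex geometric $3$-graph with special interval $X$ is already a split $3$-graph, since $V\setminus X$ is a cyclic interval of $V$ carrying the other two vertices of each edge. Lemma~\ref{b} thus gives $d(\omega_G)\geq d(\omega)/r^{5r}\geq d(\omega)/\psi(3)$ directly.

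For the inductive step $r\geq 4$, I would (i) apply Lemma~\ref{b} to produce $G_0\subseteq H(\omega)$ and a cyclic interval $Y$ so that every edge of $G_0$ has exactly one vertex in $Y$ and $r-1$ vertices in $Y':=V\setminus Y$, with $d(\omega_{G_0})\geq d(\omega)/r^{5r}$; (ii) form the weighted link $(r-1)$-graph on $Y'$ via $\omega'(S):=\sum_{v\in Y}\omega_{G_0}(S\cup\{v\})$, noting that $|\omega'|=|\omega_{G_0}|$ and $v(\omega')\leq v(\omega_{G_0})$, so that $d(\omega')\geq d(\omega_{G_0})\cdot v(\omega_{G_0})$ because the normalizing exponent drops by one; (iii) apply the inductive hypothesis to $\omega'$ on the cyclic order of $Y'$ inherited from $V$, obtaining a split $(r-1)$-graph $G'$ with cyclic-interval partition $Z_1<\dots<Z_{r-2}$ of $Y'$ (special $Z_i$) satisfying $d(\omega'_{G'})\geq d(\omega')/\psi(r-1)$; and (iv) lift to the $r$-graph $G=\{S\cup\{v\}:S\in G',\,v\in Y,\,S\cup\{v\}\in G_0\}\subseteq H(\omega)$, with candidate $V$-partition $(Z_1,\dots,Z_{r-2},Y)$.

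Two technical issues appear at step (iv), and the main obstacle is pushing both through the density bookkeeping simultaneously. The first is \emph{wrap-around}: because $V$'s cyclic order has $Y$ between the two endpoints of $Y'$, a $Z_j$ that is a cyclic interval of $Y'$ may cross the resulting gap and fail to be a cyclic interval of $V$. At most one $Z_j$ can wrap, and a short case analysis (splitting $G$ according to which side of $Y$ the relevant $Z_j$-vertices of an edge lie on, keeping the heavier case, and merging the now-empty side into $Y$) repairs the partition at a cost of at most a factor $3$ in weight. The second is \emph{vertex inflation}: one has $v(\omega_G)\leq v(\omega'_{G'})+|Y\cap V(G_0)|$, and if the $Y$-term dominates then the density $|\omega_G|/v(\omega_G)^{r-1}$ collapses. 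To prevent this I further restrict $G$ to edges whose $Y$-vertex lies in a random $k$-subset $Y^*\subseteq Y\cap V(G_0)$ with $k:=v(\omega'_{G'})$; averaging over $Y^*$ preserves at least a fraction $k/|Y\cap V(G_0)|\geq k/v(\omega_{G_0})$ of the weight while forcing $v(\omega_G)\leq 2k$. Chasing the constants yields $d(\omega_G)\geq d(\omega_{G_0})/(C_r\psi(r-1))$ for some $C_r=O(2^r)$, and combining with Lemma~\ref{b} gives $d(\omega_G)\geq d(\omega)/(r^{5r}C_r\psi(r-1))$. Since $\psi(r)/\psi(r-1)\geq r^{5(2r-1)}$ dwarfs $r^{5r}C_r$, the choice $\psi(r)=r^{5r^2}$ closes the induction with massive room to spare.
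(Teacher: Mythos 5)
Your plan is essentially the paper's own proof: apply Lemma~\ref{b} to get a bipartite subgraph, pass to the weighted link $(r-1)$-graph on the non-special side (the paper calls this $\tau$ with $|\tau|=|\omega_F|$ and $v(\tau)\le v(\omega_F)$, so $d(\tau)\ge v(\omega_F)\,d(\omega_F)$), apply induction to get a split $(r-1)$-graph, and lift back by re-attaching vertices from the special side. Your step (iv) ``vertex inflation'' fix is literally what the paper does: it sets $Z_1$ equal to $X$ when $v(\tau_E)\ge|X|$, and otherwise samples $Z_1\subseteq X$ uniformly of size $v(\tau_E)$, then uses linearity of expectation together with the short technical inequality $m\,v(\tau_E)^{r-2}v(\omega_F)/(|X|\,v(\omega_G)^{r-1})\ge 2^{-(r-1)}$ (with $m=\min\{v(\tau_E),|X|\}$) to bound $d(\omega_G)$; the paper's constant budget $2^{r-1}r^{5r}\psi(r-1)\le\psi(r)$ then closes the induction, just as your back-of-envelope comparison indicates.

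The one place you genuinely diverge is the wrap-around discussion. You are right to worry about it: after cutting out the special cyclic interval $X$, the complementary arc $Y$ acquires a new adjacency between its two endpoints, so the inductive call can return a split partition of $V(E)\subseteq Y$ in which one part straddles that glue point, and such a part is not a cyclic interval of $V(G)$ once $Z_1\subseteq X$ is re-inserted in the gap. The paper's write-up does not mention this and simply asserts that $Z_1,Z_2,\dots,Z_{r-1}$ are the parts of the split subgraph $G$; as written this step silently assumes the returned partition does not wrap. Your proposed repair (at most one part wraps; restrict $G$ to edges whose vertices in that part all lie on a fixed side of the gap, keeping the heavier restriction) is the natural fix and costs only a bounded factor. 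Two small caveats on the details: in the case where the wrapping part is the \emph{special} one (two vertices per edge of $E$), there are three sub-cases (both-left, both-right, one-each), and keeping ``the heavier of the two usable cases'' gives a factor-$2$ loss relative to their sum but not obviously relative to the total, so the clean statement is that one of the two usable sub-cases has at least a $1/3$ fraction \emph{or} the one-each case does, and in the latter situation you should instead re-cut that part at the glue point, which produces two cyclic intervals and forces a slightly different bookkeeping. None of this threatens the induction since $\psi(r)/(\,2^{r-1}r^{5r}\psi(r-1)\,)$ has enormous slack, but the constant-$3$ claim as you stated it is not quite airtight. In short: same route as the paper, with an extra (legitimate and worthwhile) technical observation that the paper glosses over.
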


\vspace{-0.1in}

\begin{proof}
Proceed by induction on $r$. For $r = 3$, Lemma \ref{b} give the theorem, since in that case a bipartite subgraph is a split subgraph. For $r > 3$,
pass to a bipartite $F \subseteq H(\omega)$ with $d(\omega_F) \geq d(\omega)/r^{5r}$ via Lemma \ref{b}. Let $X$ and $Y$ be the parts of $F$, where every edge of $F$ intersects
$X$ in exactly one vertex. Let
\[
 F_1 = \{e \backslash \{x\} : e \in F, x \in X\}
 \]
and define the new weight function $\tau$ by $\tau(f) = 0$ if $f \not \in F_1$ and for $f \in F_1$,
\[
 \tau(f) = \sum_{{e \in F}\atop{f \subset e}} \omega(e).
 \]
We note that $F_1 = F_1(\tau)$ and $|\tau| = |\omega_F|$. Since $v(\tau) \leq v(\omega_F)$,
\begin{eqnarray*}
d(\tau) = \frac{|\tau|}{v(\tau)^{r - 2}} \geq \frac{v(\omega_F)|\omega_F|}{v(\omega_F)^{r - 1}} = v(\omega_F)d(\omega_F).
\end{eqnarray*}
Note that $F_1$ is $(r - 1)$-uniform, which accounts for the appearance of the extra factor $v(\omega_F)$.
Using $d(\omega_F) \geq d(\omega)/r^{5r}$, we find
\[ d(\tau) \geq v(\omega_F) \cdot \frac{d(\omega)}{r^{5r}}.\]
By induction, there exists an almost $r$-partite subgraph $E \subseteq F_1$ such that
\begin{equation}\label{induction}
 d(\tau_{E}) \geq \frac{d(\tau)}{\psi(r - 1)} \geq v(\omega_F) \cdot \frac{d(\omega)}{r^{5r} \psi(r - 1)}.
 \end{equation}
Let $Z_2,Z_3,\dots,Z_{r-1}$ be the parts of $E$, and let $Z_1 = X$ if $v(\tau_E) \geq |X|$, otherwise let $Z_1$ be a uniformly selected subset of $X$ of size $v(\tau_E)$.
Now we define the subgraph we want: let
\[
G = \{e \cup \{z\} : e \in E, z \in Z_1\}.
\]
We claim that with positive probability, $G$ is the required almost $r$-partite subgraph, with parts $Z_1,Z_2,\dots,Z_{r-1}$. We first prove the following technical proposition:

\begin{prop} Let $m = \min\{v(\tau_E),|X|\}$. Then
\begin{equation}\label{claim}
\frac{m \cdot v(\tau_E)^{r - 2}v(\omega_F)}{|X| \cdot v(\omega_G)^{r-1}} \geq 2^{-(r - 1)}.
\end{equation}
\end{prop}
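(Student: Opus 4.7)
The plan is to exploit the fact that the two parts $Z_1 \subseteq X$ and $V(E) \subseteq Y$ of $G$ lie in disjoint intervals of the ambient vertex set, and then to split on the two possibilities for $Z_1$ that arise in its definition.

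First I would record three simple containments that control all the quantities in the ratio. Because each edge of $G$ has the form $e \cup \{z\}$ with $e \in E \subseteq F_1$ supported on $Y$ and $z \in Z_1 \subseteq X$, and $X$ and $Y$ are disjoint, the vertex set of $G$ satisfies $V(\omega_G) \subseteq Z_1 \cup V(\tau_E)$, giving the fundamental bound
\[
v(\omega_G) \leq |Z_1| + v(\tau_E).
\]
Also, since $V(\tau_E) \subseteq V(\omega_F)$ and $X \subseteq V(\omega_F)$, I have both $v(\omega_F) \geq v(\tau_E)$ and $v(\omega_F) \geq |X|$; these two lower bounds will be used alternatively in the two cases.

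Next I split on whether $Z_1 = X$ or $|Z_1| = v(\tau_E)$. In the first case one has $v(\tau_E) \geq |X|$, $m = |X|$, and $|Z_1| = |X| \leq v(\tau_E)$, so the fundamental bound becomes $v(\omega_G) \leq 2 v(\tau_E)$. The two copies of $|X|$ in \eqref{claim} cancel, and using $v(\omega_F) \geq v(\tau_E)$ gives
\[
\frac{m \cdot v(\tau_E)^{r-2} v(\omega_F)}{|X| \cdot v(\omega_G)^{r-1}} \;=\; \frac{v(\tau_E)^{r-2} v(\omega_F)}{v(\omega_G)^{r-1}} \;\geq\; \frac{v(\tau_E)^{r-1}}{(2 v(\tau_E))^{r-1}} \;=\; 2^{-(r-1)}.
\]
In the second case $v(\tau_E) < |X|$, $m = |Z_1| = v(\tau_E)$, and again $v(\omega_G) \leq 2 v(\tau_E)$; now the bound $v(\omega_F) \geq |X|$ cancels the $|X|$ in the denominator, and the same arithmetic yields $2^{-(r-1)}$.

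There is no real conceptual obstacle; the proof is pure bookkeeping once the containments are in hand. The one subtle point worth checking carefully is that the two lower bounds $v(\omega_F) \geq v(\tau_E)$ and $v(\omega_F) \geq |X|$ are each tailored to exactly one of the two cases: using the wrong one would leave a factor of $|X|/v(\tau_E)$ or its reciprocal that cannot be absorbed into the universal constant $2^{-(r-1)}$. The uniform bound $v(\omega_G) \leq 2 v(\tau_E)$, valid in both cases, is what makes the splitting clean.
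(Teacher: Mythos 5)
Your proof is correct and follows essentially the same case split as the paper's proof (on whether $Z_1=X$ or $|Z_1|=v(\tau_E)$), differing only in which of the two lower bounds on $v(\omega_F)$ is invoked in each branch and in organizing the algebra around the uniform bound $v(\omega_G)\le 2v(\tau_E)$. This is a minor streamlining, not a different method.
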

To see this, if $m = v(\tau_E)$, then $v(\omega_F) \geq |X|$ and $v(\omega_G) = 2m$, so
\[ m \cdot v(\tau_E)^{r - 2}v(\omega_F) \geq m^{r - 1} \cdot |X| \geq |X| \cdot (v(\omega_G)/2)^{r - 1},\]
as required for (\ref{claim}). Otherwise, $m = |X|$ and $v(\omega_F) \geq v(\omega_G)$ and $v(\tau_E) \geq v(\omega_G)/2$,  so
\[ m \cdot v(\tau_E)^{r-2}v(\omega_F) \geq |X| \cdot v(\tau_E)^{r - 2}v(\omega_G) \geq |X| \cdot (v(\omega_G)/2)^{r-1}\]
which proves the proposition. \qed

\medskip

By linearity of expectation,
\begin{equation}\label{expect}
\mathbb E(|\omega_G|) = \frac{m \cdot |\tau_E|}{|X|}.
\end{equation}
Fix an instance of $G$ with $|\omega_G| \geq \mathbb E(|\omega_G|)$. Then by (\ref{induction}) and (\ref{expect}),
\begin{eqnarray*}
d(\omega_G) \; \; = \; \; \frac{|\omega_G|}{v(\omega_G)^{r - 1}} &\geq& \frac{m \cdot |\tau_E|}{|X| \cdot v(\omega_G)^{r - 1}} \\
&=& \frac{m \cdot v(\tau_E)^{r-2}}{|X| \cdot v(\omega_G)^{r - 1}} \cdot d(\tau_E)\\
&\geq&   \frac{m \cdot v(\tau_E)^{r-2}v(\omega_F)}{|X| \cdot v(\omega_G)^{r-2}} \cdot \frac{d(\omega)}{r^{5r}\psi(r - 1)}.
\end{eqnarray*}
Using (\ref{claim}), we obtain
\[ d(\omega_G) \geq \frac{d(\omega)}{2^{r-1} \cdot r^{5r} \psi(r - 1)}.\]
To complete the proof of Theorem \ref{weightedversion}, it suffices to prove that
$2^{r - 1} r^{5r} \psi(r - 1) \leq \psi(r)$. This follows from
\[ 2^{r - 1} r^{5r} \psi(r - 1) \leq 2^{r} r^{5r} r^{5r^2 - 10r + 5} \leq 2^r r^{5-5r} \psi(r)\]
since $r \geq 3$, $r^{5 - 5r} \leq 2^{-r}$.
\end{proof}

 Now Theorem \ref{splitter} follows from Theorem \ref{weightedversion} by setting $\omega(e) = 1$ for all $e \in H$ and $\omega(e) = 0$ otherwise, in which case $d(H) = d(\omega)$.

\section{Proof of Theorem~\ref{cpthm}}

\subsection{Upper bound for $k \leq r + 1$}
We start with the following recurrence:

\begin{prop} \label{firstclaim}
Let $2\leq k\leq r+1$ and $n\geq r+k$. Then
\begin{equation}\label{j12}
\ex_{\rightarrow}(n, CP^r_k)\leq {n-2\choose r-2}+\ex_{\rightarrow}(n-2, CP^{r-1}_{k-1})+\ex_{\rightarrow}(n-1, CP^r_k).
\end{equation}
\end{prop}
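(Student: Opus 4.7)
I would prove the recurrence by isolating the smallest vertex of $H$ and performing a careful edge partition. Let $H$ be a $CP^r_k$-free ordered $r$-graph on $[n]$, write $1$ and $2$ for the smallest and second-smallest vertices, and split the edges into $\mathcal E_0$ (edges not containing $1$), $\mathcal E_1$ (edges containing $\{1,2\}$), and $\mathcal E_2$ (edges containing $1$ but not $2$). Restricting $H$ to $\{2,\ldots,n\}$ gives a $CP^r_k$-free ordered $r$-graph on $n-1$ vertices, so $|\mathcal E_0|\le\ex_{\rightarrow}(n-1,CP^r_k)$; and $|\mathcal E_1|\le\binom{n-2}{r-2}$ by counting the $(r-2)$-element extensions of $\{1,2\}$ in $\{3,\ldots,n\}$.

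The heart of the argument is the bound $|\mathcal E_2|\le\ex_{\rightarrow}(n-2,CP^{r-1}_{k-1})$. Associate to each $e\in\mathcal E_2$ the $(r-1)$-set $e\setminus\{1\}\subseteq\{3,\ldots,n\}$, forming an $(r-1)$-graph $L$ on $n-2$ vertices with $|L|=|\mathcal E_2|$. The plan is to show that $L$ is $CP^{r-1}_{k-1}$-free. Toward a contradiction, suppose $L$ contains a $CP^{r-1}_{k-1}$ on vertices $w_0<w_1<\cdots<w_{r+k-4}$ in the prescribed crossing order, with edges $f_i=\{w_i,w_{i+1},\ldots,w_{i+r-2}\}$ for $0\le i\le k-2$. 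I would construct a $CP^r_k$ in $H$ on the vertex set $\{1,2,w_0,\ldots,w_{r+k-4}\}$ by placing $v_0=1$ and $v_r=2$ (valid because $k\le r+1$ forces $v_r$ to sit just above $v_0$ in the column-zero position of the $CP^r_k$ ordering) and filling $v_1,\ldots,v_{r-1},v_{r+1},\ldots,v_{r+k-2}$ with the $w_j$'s in order-preserving fashion. The first edge $e_0=\{1\}\cup f_0$ of the candidate $CP^r_k$ lies in $H$ directly from $f_0\in L$.

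The main obstacle is verifying that the remaining $k-1$ edges $e_1,\ldots,e_{k-1}$ of the candidate $CP^r_k$ also lie in $H$: each of these contains $v_r=2$ rather than $v_0=1$, so the required $(r-1)$-sets live in the link of $2$, not of $1$. Resolving this point requires either a symmetrization argument between vertices $1$ and $2$ (exploiting that the induced ordering on $\{1,2,w_0,\ldots,w_{r+k-4}\}$ is invariant under swapping $1$ and $2$ within the column-zero interval, so that one may reroute the embedding), or else an inductive argument on $r$ and $k$ that iterates the same partition on the link of $2$ to extract the needed structure. The hypothesis $k\le r+1$ is used critically in both approaches, since it keeps the crossing-column pattern of $CP^r_k$ minimal---only one ``new'' vertex appears between $v_0$ and $v_1$---so that the exchange between $1$ and $2$ does not interfere with the ordering of the $w_j$'s; this is precisely the range in which the linear formula of Theorem~\ref{cpthm} holds.
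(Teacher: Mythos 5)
Your three-way partition $\mathcal E_0,\mathcal E_1,\mathcal E_2$ is not the paper's decomposition, and the crucial step you flag as the ``main obstacle'' is in fact a genuine flaw, not a technicality to be patched. The bound $|\mathcal E_2|\le \ex_{\rightarrow}(n-2,CP^{r-1}_{k-1})$ is simply false: take $r=3$, $k=2$ and let $H$ consist of the single edge $\{1,3,4\}$. Then $H$ is $CP^3_2$-free, $\mathcal E_2 = \{\{1,3,4\}\}$ so $|\mathcal E_2|=1$, yet $\ex_{\rightarrow}(n-2,CP^2_1)=0$. The difficulty you identify --- that the edges $e_1,\dots,e_{k-1}$ of your candidate $CP^r_k$ live in the link of $2$, about which the hypothesis tells you nothing --- cannot be fixed by ``symmetrization'' or by induction on the link of $2$, because a $CP^{r-1}_{k-1}$ in the link of $1$ genuinely need not produce any forbidden configuration in $H$ when the link of $2$ is sparse. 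The quantity that is small is not the link of $1$, but the \emph{common} link of $1$ and $2$.

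The paper avoids this by a different decomposition. Let $G_1$ be the edges containing both $1$ and $2$; let $G_2$ be those edges $e$ with $1\in e$, $2\notin e$ \emph{and} $e-\{1\}+\{2\}\in G$; and let $G_3$ be the $r$-graph on $n-1$ vertices obtained from $G-G_1-G_2$ by merging $1$ and $2$ into a single vertex. Removing $G_1$ keeps $G_3$ $r$-uniform, and removing $G_2$ prevents multi-edges in $G_3$, so $e(G)=e(G_1)+e(G_2)+e(G_3)$. For $G_2$, deleting vertex $1$ gives an $(r-1)$-graph $H_2$ on $\{3,\dots,n\}$ which is $CP^{r-1}_{k-1}$-free: given a crossing $(k-1)$-path $e''_1,\dots,e''_{k-1}$ in $H_2$, set $e_1=e''_1\cup\{1\}$ and $e_i=e''_{i-1}\cup\{2\}$ for $2\le i\le k$. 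Because every $e''_i$ extends to an edge of $G$ through both $1$ and $2$ (this is exactly the defining property of $G_2$), all $k$ edges lie in $G$ and form a $CP^r_k$. This is the ingredient your argument is missing. For $G_3$, one checks that a $CP^r_k$ through the merged vertex lifts back to a $CP^r_k$ in $G$ using $1$ or $2$, so $e(G_3)\le\ex_{\rightarrow}(n-1,CP^r_k)$; note that this bound applies to the \emph{contracted} graph, not merely to your $\mathcal E_0$, which is essential because the unmatched edges of your $\mathcal E_2$ have to be absorbed somewhere, and they are absorbed into $G_3$ by the merge.

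So the take-away: replace ``all edges through $1$ and not $2$'' with ``edges through $1$ and not $2$ whose replacement $1\mapsto 2$ is also present,'' and replace ``restrict to $\{2,\dots,n\}$'' with ``contract $\{1,2\}$ to a single vertex.'' Without both changes the recurrence does not follow.
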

\proof Let $G$ be an $n$-vertex ordered $r$-graph  not
containing  $CP^r_k$ with $e(G)=\ex_{\rightarrow}(n, CP^r_k)$. We may assume $V(G)=[n]$ with the natural ordering.
Let $G_1=\{e\in G: \{1,2\}\subset e\}$ and $G_2=\{e\in G: 1\in e, 2\notin e, e-\{1\}\cup \{2\}\in G\}$.
Let $G_3$ be obtained from $G-E(G_1)-E(G_2)$ by
 gluing vertex $1$ with vertex $2$ into
a new vertex $2'$.

Since we have deleted the edges of $G_1$, our $G_3$ is an $r$-graph,
 and since we have deleted the edges of $G_2$,  $G_3$ has no multiple edges.
Thus $e(G)=e(G_1)+e(G_2)+e(G_3)$.

 We view $G_3$ as an ordered $r$-graph with vertex set $\{2',3,\ldots,n\}$. If $G_3$ contains a crossing ordered path
 $P$ with edges $e'_1,e'_2,\ldots,e'_k$, then only $e'_1$ may contain $2'$, and all other edges are edges of $G$.
 Thus either $P$ itself is in $G$ or the path obtained from $P$ by replacing $e'_1$ with $e'_1-\{2'\}+\{1\}$ or with
$e'_1-\{2'\}+\{2\}$ is in $G$, a contradiction. Thus $G_3$ contains no $CP^r_k$ and hence
\begin{equation}\label{j13}
e(G_3)\leq \ex_{\rightarrow}(n-1, CP^r_k).
\end{equation}

By definition, $e(G_1)\leq {n-2\choose r-2}$. We can construct an ordered $(r-1)$-graph $H_2$ with vertex set
$\{3,4,\ldots,n\}$ from $G_2$ by deleting from each edge vertex $1$. If $H_2$ contains a crossing ordered path
 $P'$ with edges $e''_1,e''_2,\ldots,e''_{k-1}$, then the set of edges $\{e_1,\ldots,e_k\}$ where $e_1=e''_1+\{1\}$ and
 $e_i=e''_{i-1}+\{2\}$ for $i=2,\ldots,k$ forms a $CP^r_k$ in $G$, a contradiction. Summarizing, we get
\begin{eqnarray*}
\ex_{\rightarrow}(n, CP^r_k)=e(G) &=& e(G_1)+e(G_2)+e(G_3) \\
&\leq&  {n-2\choose r-2}+\ex_{\rightarrow}(n-2, CP^{r-1}_{k-1})+ \ex_{\rightarrow}(n-1, CP^r_k),
\end{eqnarray*}
as claimed.\qed

We are now ready to prove the upper bound in Theorem \ref{cpthm} for $k \leq r + 1$: We are to show that
$\ex_{\rightarrow}(n, CP^r_k) \leq {n \choose r} - {n - k + 1 \choose r}$.
We use induction on $k+n$. Since $CP^r_1$ is simply an edge, $\ex_{\rightarrow}(n, CP^r_1)=0$ for any $n$ and $r$, and the theorem holds for $k=1$. 

Suppose now the upper bound in the theorem holds for all $(k',n',r')$ with $k'+n'<k+n$ and we want to prove it for $(k,n,r)$. By the previous paragraph,
it is enough to consider the case $k\geq 2$. Then by Proposition \ref{firstclaim} and the induction assumption,
\begin{eqnarray*}\ex_{\rightarrow}(n, CP^r_k) &\leq& {n-2\choose r-2}+
\left[{n-2 \choose r-1} - {n-k \choose r-1}\right]+\left[ {n-1 \choose r} - {n-k \choose r}\right] \\
&=&\left[{n-2\choose r-2}  +{n-2 \choose r-1}+{n-1 \choose r}\right] -
\left[  {n-k \choose r}+ {n-k \choose r-1}\right] \\
&=& {n \choose r} - {n-k+1 \choose r},
\end{eqnarray*}
as required. This proves the upper bound in Theorem \ref{cpthm} for $k \leq r + 1$. \qed

\subsection{Lower bound for $k \leq r + 1$.} 

For the lower bound in Theorem \ref{cpthm} for $k \leq r + 1$, we provide the following construction. For  $1\leq k\leq r$, let $G(n,r,k)$ be the family of $r$-tuples $(a_1,\ldots,a_r)$ of positive integers such that
\begin{center}
\begin{tabular}{lp{5in}}
$(a)$ & $1\leq a_1<a_2<\ldots<a_r\leq n$ and \\
$(b)$ & there is $1\leq i\leq k-1$ such that $a_{i+1}=a_i+1$.
\end{tabular}
\end{center}
Also, let $G(n,r,r+1)=G(n,r,r)\cup \{(a_1,\ldots,a_r): a_1<a_2<\ldots<a_r=n\}$.

\medskip

Suppose $G(n,r,k)$ has a crossing ordered path with edges $e_1,\ldots,e_k$.
Let $e_1=(a_1,\ldots,a_r)$ where $1\leq a_1<a_2<\ldots<a_r\leq n$.
By the definition of a crossing ordered path, for each $2\leq j\leq k$, $e_j$ has the form
\begin{equation}\label{j14}
\mbox{\em
$e_j=(a_{j,1},\ldots,a_{j,r})$ where $a_i< a_{j,i}<a_{i+1}$ for $1\leq i\leq j-1$ and $a_{j,i}=a_{i}$ for $j\leq i\leq r$.}
\end{equation}
By the definition of $G(n,r,k)$, either there is $1\leq i\leq k-1$ such that $a_{i+1}=a_i+1$ or $k=r+1$ and $a_r=n$.
In the first case, we get a contradiction with~(\ref{j14}) for $j=i+1$. In the second case,
we get a contradiction with~(\ref{j14}) for $j=r+1$.

In order to calculate $|G(n,r,k)|$, consider the following procedure $\Pi(n,r,k)$ of generating all $r$-tuples of elements of $[n]$ {\em not} in
$G(n,r,k)$: take an  $r$-tuple $(a_1,\ldots,a_r)$
of positive integers such that
 $1\leq a_1<a_2<\ldots<a_r\leq n-k+1$ and then increase $a_j$ by $j-1$ if $1\leq j\leq k$ and by $k-1$ if
 $k\leq j\leq r$. By definition, the number of outcomes of this procedure is ${n-k+1\choose r}$. Also  $\Pi(n,r,k)$
 never generates a member of $G(n,r,k)$ and generates each other $r$-subset of $[n]$ exactly once.  \qed

\subsection{Upper bound for $k \geq r + 2$}

In this section we apply Theorem~\ref{splitting1} to prove the upper bound in Theorem~\ref{cpthm} for $k \geq r + 2$.
This follows quickly from the following proposition:

\begin{prop} \label{secondclaim}
For $k \geq 1$, $r \geq 2$, $z_{\rightarrow}(n,CP_k^r) = O(n^{r - 1})$.
\end{prop}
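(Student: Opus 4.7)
The plan is to partition the edges of $H$ according to a ``stage'' function and bound each class separately. For $e \in H$, let $\phi(e)$ denote the largest $m$ such that $H$ contains a copy of $CP_m^r$ whose last edge is $e$. Since $H$ is $CP_k^r$-free we have $\phi(e) \in \{1, 2, \ldots, k-1\}$, so the classes $E_m := \{e \in H : \phi(e) = m\}$ partition $E(H)$ into at most $k - 1$ parts.

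Unpacking Definition~\ref{defCP}, the $i$-th swap in a $CP^r$ (the step from the $(i-1)$-st to the $i$-th edge, starting at $i=1$) replaces the vertex lying in part $A_{((i-1) \bmod r) + 1}$ by a strictly larger vertex of that same part. Hence, if $\phi(e) = m$, the next swap available to extend a witnessing path must occur in part $A_{j(m)}$, where $j(m) := ((m-1) \bmod r) + 1$. The key claim is: two edges $e, e' \in E_m$ that agree in every part except possibly $A_{j(m)}$ must satisfy $e = e'$. Indeed, if their vertices $v, v'$ in $A_{j(m)}$ satisfy $v < v'$, then taking any $CP_m^r$ ending at $e$ and appending the swap $e \to e'$ (which lies in the correct part $A_{j(m)}$ and strictly increases the coordinate) yields a $CP_{m+1}^r$ in $H$ ending at $e'$, contradicting $\phi(e') = m$.

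By the claim, each edge of $E_m$ is determined by its $r-1$ vertices outside $A_{j(m)}$, so $|E_m| \leq n^{r-1}$. Summing over $m$ gives
\[ z_{\to}(n, CP_k^r) = |H| = \sum_{m = 1}^{k - 1} |E_m| \leq (k - 1)\, n^{r - 1} = O(n^{r - 1}),\]
as required. The only delicate point is verifying that the appended swap produces a valid $CP_{m+1}^r$ with the correct cyclic color pattern, but this is immediate from the identification of $j(m)$ as the index of the part where the $m$-th swap of the cyclic pattern must occur; I do not foresee any serious obstacle.
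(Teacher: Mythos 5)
Your proof is correct, and it takes a genuinely different route from the paper. You define $\phi(e)$ as the length of the longest crossing path ending at $e$, classify edges into $E_1,\dots,E_{k-1}$ accordingly, and show each class $E_m$ contains at most one edge per $(r-1)$-tuple of vertices in the parts other than $A_{j(m)}$ (so $|E_m|\le n^{r-1}$), giving $e(H)\le (k-1)n^{r-1}$. The paper instead argues by induction on $k$ with a peeling step: for each $(r-1)$-set $S$ it marks the edge $S\cup\{w\}$ with $w$ maximal, shows the unmarked subgraph $H'$ still exceeds the threshold $f(k-1)=(k-1)\prod n_i\sum_i 1/n_i$ and so (by induction) contains a $CP_{k-1}^r$, and then extends that path by the marked edge over its last $r-1$ vertices. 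Both arguments rest on the same local observation --- that edges over a fixed $(r-1)$-set are totally ordered by the one remaining coordinate, and a crossing path extends by swapping to a strictly larger vertex in the correct cyclic part --- but yours is a one-shot Dilworth-style partition into levels by longest ending path, whereas the paper's is a recursive greedy peeling that yields the somewhat finer bound $k\prod_i n_i\sum_i 1/n_i$ in terms of the part sizes $n_i$.
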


\proof We prove a stronger statement by induction on $k$: if $H$ is an ordered $n$-vertex $r$-graph with an interval $r$-coloring
with parts $X_1,X_2,\dots,X_r$ of size $n_1,n_2,\dots,n_r$, and $H$ has no crossing $k$-path, then
$$e(H) \le k \cdot \prod_{i = 1}^r n_i \cdot \sum_{i=1}^r \frac{1}{n_i}.$$
Let $f(k)$ be this upper bound and let $P = \prod_{i = 1}^r n_i$.
The base case $k=1$ is trivial. For the induction step, assume the result holds for paths of length at most $k-1$, and suppose $e(H) > f(k)$. For each $(r-1)$-set $S$ of vertices mark the edge $S \cup \{w\}$ where $w$ is maximum.  Let $H'$ be the $r$-graph of unmarked edges. Since we marked at most $f(k)/k$ edges, $e(H') > f(k-1)$. By the  induction
assumption there exists a $CP^r_{k-1} =v_1 v_2 \ldots v_{k+r-2}  \subset H'$ and we can extend this to a $CP^r_k$ in $H$ using the marked edge obtained from the $(r-1)$-set $\{v_{k}, \ldots, v_{k+r-2}\}$. This proves the proposition. \qed

Proposition \ref{secondclaim} and Theorem~\ref{splitting1} give $\ex_{\rightarrow}(n, CP_k^r) = O(n^{r-1}\log n)$ for all $k \ge 2$ as required.

\subsection{Lower bound for $k \geq r + 2$}\label{orderedconstruction}

We now turn to lower bound in Theorem~\ref{cpthm}.
Let $G(n,r,r+2)$ be the family of $r$-tuples $(a_1,\ldots,a_r)$
of positive integers such that
\begin{center}
\begin{tabular}{lp{5in}}
$(a)$ & $1\leq a_1<a_2<\ldots<a_r\leq n$ and \\
$(b)$ & $a_{2}-a_1=2^p$, where $p\leq \log_2 (n/4)$ is an integer.
\end{tabular}
\end{center}
The number of choices of $a_1\leq n/4$ is $n/4$,  then the number of choices of $a_2$ is $\log_2 (n/4)$, and
the number of choices of the remaining $(r-2)$-tuple $(a_3,\ldots,a_r)$ is at least ${n/2\choose r-2}$.
Thus if $r\geq 3$ and $n>20r$, then
\begin{equation}\label{j15}
|G(n,r,r+2)|\geq \frac{n^{r-1}}{(r-2)!3^{r}}\log_2 n.
\end{equation}

Suppose $G(n,r,r+2)$ contains a $CP_{r+2}^r$ with  vertex set  $\{a_1, \ldots, a_{2r+1}\}$ and edge set
 $\{a_i\ldots a_{i+r-1}: 1 \le i \le r+2\}$. By the definition of ordered path, the vertices are in the following order on $[n]$:
  \begin{equation}\label{j16}
  a_1<a_{r+1}<a_{2r+1}<a_2<a_{r+2}<a_3<a_{r+3}<\ldots<a_r<a_{2r}.
 \end{equation}
 Hence the 2nd, $r+1$st and $r+2$nd edges are
$$\{a_{r+1},a_2,a_3 \ldots, a_{r}\}, \qquad \{a_{r+1}, a_{r+2}\ldots, a_{2r}\}, \qquad \{a_{2r+1},a_{r+2}, \ldots, a_{2r}\}.
$$
 The differences between the second and the first coordinates  in these three vectors are
$$d_1=a_{2}-a_{r+1} , \qquad  d_2= a_{r+2}-a_{r+1}, \qquad d_3= a_{r+2}-a_{2r+1}.$$
By~(\ref{j16}), it impossible for each of the three differences $d_1, d_2, d_3$ to be  powers of two.
This yields the lower bound in Theorem~\ref{cpthm} for $k\geq r+2$. \qed

\section{Proof of Theorem \ref{cgthm}}

In this section, we first apply Theorem \ref{splitting} to prove Theorem \ref{cgthm} for $k \leq 2r - 1$: we will show
\[ \ex_{\cir}(n,CP_k^r) \leq kr^{5r^2}n^{r-1}.\]
Since for $k,r \geq 2$, the extremal function $\ex(n,P_k^r)$ for an $r$-uniform tight path is $\Omega(n^{r - 1})$, and
$\ex_{\cir}(n,CP_k^r) \geq \ex(n,P_k^r)$, we have $\ex_{\cir}(n,CP_k^r) = \Theta(n^{r - 1})$ for $k \leq 2r - 1$. In the case $k = r + 1$, 
we have $$\ex_{\cir}(n,CP_k^r) \leq \ex_{\rightarrow}(n,CP_k^r) = {n \choose r} - {n - r \choose r}.$$ On the other hand,
$$\ex_{\cir}(n,CP_k^r) \geq \ex_{\cir}(n,CM_2^r) = {n \choose r} - {n - r \choose r},$$ so the second statement in Theorem \ref{cgthm} follows.
For $k \geq 2r$, we have $$\ex_{\cir}(n,CP_k^r) \leq \ex_{\rightarrow}(n,CP_k^r) = O(n^{r - 1}\log n)$$ from Theorem \ref{cpthm};
so  to prove Theorem \ref{cgthm} for $k \geq 2r$, we only need a matching construction.

\subsection{Upper bound for $k \leq 2r - 1$}

Given a convex geometric $r$-graph $H$ with $e(H) > kr^{5r^2}n^{r-1}$, we apply Theorem~\ref{splitting} to obtain a split subgraph $G \subset H$
where $e(G) > k v(G)^{r-1}$. Let $X_0 < X_1 < \dots < X_{r-3} < X$ be cyclic intervals such that every edge of $G$ contains two vertices in
$X$ and one vertex in each $X_i : 0 \leq i \leq r - 3$. Our main proposition is as follows:

\begin{prop} For $k \in [2r-1]$, $G$ contains a crossing $k$-path $v_0 v_1 \ldots v_{k+r-2}$ such that $v_i \in X_i$ for $i\not \equiv -1, -2 \mod r$ and $v_i \in X$ for $i \equiv -1, -2 \mod r$.
\end{prop}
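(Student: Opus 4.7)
The plan is to prove the proposition by induction on $k \in \{1, \dots, 2r-1\}$, adapting the marking argument of Proposition~\ref{secondclaim} to the convex geometric split setting. For the base case $k=1$: since $e(G) > v(G)^{r-1} \ge 1$, any edge $\{v_0,v_1,\dots,v_{r-1}\}$ of $G$ with $v_i \in X_i$ for $i \le r-3$ and $v_{r-2} < v_{r-1}$ in $X$ is a $CP_1^r$ of the required form.

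For the inductive step at $k \ge 2$, I mark at most $v(G)^{r-1}$ edges of $G$ so that the unmarked subgraph $G'$ still contains a $CP_{k-1}^r$ of the required form, and then extend it using a marked edge. For each $(r-1)$-set $S \subseteq V(G)$ whose class profile matches that of $e_{k-1} \cap e_k = \{v_{k-1},\dots,v_{k+r-3}\}$ in a putative $CP_k^r$ of the target type, I mark a single edge $S \cup \{w\}$ of $G$. The choice of $w$ is dictated by $r' := (k-2) \bmod r$, which determines the class of the new vertex $v_{k+r-2}$: in Case A ($r' \le r-3$, so $v_{k+r-2} \in X_{r'}$), take $w$ to be the maximum element of $X_{r'}$ with $S \cup \{w\} \in G$; in Case C ($r' = r-1$, forcing $k = r+1$, so $v_{k+r-2}$ should be the largest $X$-vertex of the path), take $w$ to be the maximum element of $X$ with $S \cup \{w\} \in G$; in Case B ($r' = r-2$, forcing $k = r$, so $v_{k+r-2}$ must lie strictly between $v_{r-2}$ and $v_{r-1}$ in $X$), take $w$ to be the maximum element of $X$ strictly less than the unique $X$-vertex of $S$, subject to $S \cup \{w\} \in G$.

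In each case the number of relevant $(r-1)$-sets is bounded by $v(G)^{r-1}$, so $e(G') > (k-1)v(G)^{r-1}$ and by the inductive hypothesis $G'$ contains a $CP_{k-1}^r = v_0v_1\cdots v_{k+r-3}$ of the required split form. Setting $S := \{v_{k-1},\dots,v_{k+r-3}\}$ and $v_{k+r-2} := w$ (where $S \cup \{w\}$ is the edge marked at $S$), the only remaining crossing-path condition to verify is that $w$ lies strictly past $v_{k-2}$ in the relevant ordering. Since $e_{k-1} = S \cup \{v_{k-2}\} \in G$, the vertex $v_{k-2}$ is a valid candidate for $w$, so by maximality $w$ is at least $v_{k-2}$ in the appropriate order; equality would force the marked edge to coincide with $e_{k-1}$, contradicting $e_{k-1} \in G'$. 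Hence $w$ strictly exceeds $v_{k-2}$, which is precisely the crossing-path condition for the new vertex.

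The main subtlety is Case B ($k=r$), where $v_{2r-2}$ must lie \emph{between} $v_{r-2}$ and $v_{r-1}$ in the $X$-ordering rather than at an extremum. The modified marking (maximum $w \in X$ below the $X$-vertex of $S$) guarantees $v_{2r-2} < v_{r-1}$, while the same contradiction argument applied with $v_{k-2} = v_{r-2}$ gives $v_{2r-2} > v_{r-2}$, placing $v_{2r-2}$ at exactly the required middle position in $X$.
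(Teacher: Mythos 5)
Your proof is correct and follows essentially the same approach as the paper's: induction on $k$, marking one edge per candidate $(r-1)$-set (the maximum admissible extension, with the special "between $v_{r-2}$ and $v_{r-1}$" marking rule when the new vertex lands in $X$ with residue $-2 \bmod r$), then extending a $CP_{k-1}^r$ found in the unmarked subgraph. The only difference is cosmetic: the paper indexes by $i = k \bmod r$ while extending to $P_{k+1}$, whereas you index by $r' = (k-2) \bmod r$ while extending to $P_k$, and you make explicit the strictness argument ($w \ne v_{k-2}$ because $e_{k-1}$ survives in $G'$) that the paper leaves implicit.
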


\proof We proceed by induction on $k$, where the base case $k = 1$ is trivial. For the induction step, suppose that $1\le k \le 2r-2$, and we have proved the result for $k$ and we wish to prove it for $k+1$. Suppose that $k \equiv i \not\equiv 0, -1$ (mod $r$) where $i<r$. For each $f \in \partial G$ that has no vertex in $X_{i-1}$,  delete the edge $f \cup v \in G$
where $v$ is the largest vertex in $X_{i-1}$ in clockwise order. Let $G'$ be the subgraph that remains after deleting these edges. Then
$e(G')\ge e(G)-m^{r-1}>(k+1)m^{r-1}-m^{r-1}=km^{r-1}$, so by induction there is a $P_{k}^r$ in $G'$ with vertices $v_0, v_1, \ldots, v_{k-1},\ldots, v_{k+r-2}$, where $v_i \in X_i$ for $i\not \equiv -1, -2$ (mod $r$) and $v_i \in X$ for $i \equiv -1, -2$ (mod $r$). Let $v=v_{k+r-1}$ be the vertex in $X_{i-1}$ for which the edge $e_k=v_k v_{k+1} \ldots v_{k+r-1}$ was deleted in forming $G'$. Note that $v$ exists as $v_{k-1} v_k \ldots v_{k+r-2} \in E(G)$ and so $v_k \ldots v_{k+r-2} \in \partial G.$
Adding vertex $v$ and edge  $e_k$ to our copy of $P_k^r$ yields a copy of $P_{k+1}^r$ as required.

 Next suppose that $i \equiv 0,-1$ (mod $r$). In fact, we may assume that $k\in \{r-1, r\}$, and we are trying to add vertex $v=v_{k+r-1} \in \{v_{2r-2}, v_{2r-1}\}$ as above but now we want  $v \in X$. Suppose that $k=r-1$ and we are trying to add the vertex $v=v_{2r-2}$. Proceed exactly as before, except that when we have $f \in \partial G$ that has exactly one vertex in $X$, we choose $v$ to be the largest vertex in $X$ such that $v<v_{r-1}$ and $f \cup \{v\} \in G$. Such a $v$ certainly exists due to the edge  $e=f \cup \{v_{r-2}\} \in G$. For the case $k=r$, we choose $v$ to be the largest vertex in $X$ which again exists. \qed

\subsection{Lower bound for $k \geq 2r$}

We take the same family $G(n,r,r+2)$ as used for ordered hypergraphs (see Section \ref{orderedconstruction}),
but in the cyclic ordering of the vertex set.  When we have a $k$-edge crossing path $P=w_1w_2\ldots w_{r+k-1}$, the vertex $w_1$ does not need to be the leftmost in the first
edge $w_1\ldots w_r$, so the argument above does not go through for $k=r+2$. In fact,   $G(n,r,r+2)$ does contain
$CP_k^r$ for $k \leq 2r-1$. However, it does not have a crossing $(r+1)$-edge path in which the first vertex is the second left in
the first edge (repeating the argument in Subsection~\ref{orderedconstruction}). This implies that $G(n,r,r+2)$ does not
contain
$CP_{2r}^r$: If it has such a path $P=w_1\ldots w_{3r-1}$ and $w_1$ is the $i$th smallest in the first edge, then
$w_2$ is the $(i+1)$st smallest in the second edge and so on (modulo $r$). Thus, for some $1\leq j\leq r$, vertex $w_j$ is the second left
in the $j$th edge, and the subpath of $P$ starting from the $j$th edge has at least $r+1$ edges. \qed

\section{Concluding remarks}

$\bullet$ A hypergraph $F$ is a {\em forest} if there is an ordering of the edges $e_1,e_2,\dots,e_t$ of $F$ such that for all $i \in \{2,3,\dots,t\}$, there exists $h < i$ such that $e_i \cap \bigcup_{j < i} e_j \subseteq e_h$. It is not hard to show that $\ex(n,F) = O(n^{r - 1})$
 for each $r$-uniform forest $F$. It is therefore natural to extend the Pach-Tardos Conjecture~\ref{ptc} to $r$-graphs as follows:

\begin{conjecture}\label{main}
Let $r \geq 2$. Then for any ordered $r$-uniform forest $F$ with interval chromatic number $r$, $\ex_{\rightarrow}(n,F) = O(n^{r-1} \cdot \mbox{\rm polylog} \, n)$.
\end{conjecture}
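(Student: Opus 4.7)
The plan is to combine Theorem~\ref{splitting1} with a greedy edge-by-edge embedding argument in the spirit of Pach and Tardos. Since $\ex(n,F)=O(n^{r-1})$ for any $r$-uniform forest $F$ (by a standard peeling argument on a forest ordering of the edges), we have $z_{\to}(n,F)=O(n^{r-1})$, so we are in the regime $c=r-1$ of Theorem~\ref{splitting1}, giving $\ex_{\to}(n,F)=O(z_{\to}(n,F)\log n)$. It therefore suffices to prove $z_{\to}(n,F)=O(n^{r-1}\cdot\mbox{\rm polylog}\,n)$, that is, to bound the number of edges in an $n$-vertex $r$-partite ordered $r$-graph $H$ on intervals $A_1<A_2<\cdots<A_r$ that avoids $F$, where $F$ has interval chromatic partition $V_1<V_2<\cdots<V_r$.

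The natural approach is a greedy embedding: fix a forest ordering $e_1,\ldots,e_t$ of $F$, so each $e_i$ meets $e_1\cup\cdots\cup e_{i-1}$ in a subset of some earlier edge $e_{h(i)}$, and embed $e_i$ by choosing the $s_i=|e_i\setminus e_{h(i)}|$ new vertices so that the resulting $r$-set is an edge of $H$ in the prescribed intervals. For this to succeed at each step, the image of $e_{h(i)}\cap e_i$ must admit many legitimate extensions in $H$. One would arrange this by a preliminary regularization: iteratively discard edges of $H$ passing through ``low-codegree'' $(r-s)$-sets, where $s$ ranges over the intersection sizes appearing along the forest ordering, so that after cleanup every surviving partial set has at least $K$ legitimate extensions, with $K$ a polylogarithmic factor below the average codegree. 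Each regularization step costs a constant number of $\log n$ factors; since the number of intersection types depends only on $F$, the cumulative loss remains $\mbox{\rm polylog}\,n$, and the greedy embedding goes through provided $e(H)\geq Cn^{r-1}\log^D n$ for suitable constants $C=C(F)$ and $D=D(F)$.

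The principal obstacle is the interaction between the regularization and the \emph{ordered} constraints. In the unordered setting the analogous argument yields $\ex(n,F)=O(n^{r-1})$ with no $\log$ factors, but in the ordered setting the $s_i$ new vertices of $e_i$ must be placed in prescribed intervals relative to the already-embedded vertices (since $F$ is ordered), so ordinary codegree regularity is not enough. One has to track, for every ordered $(r-s)$-set $T$ and every ``ordered type'' of extension, how many completions lie in $H$, and keep all these statistics simultaneously uniform while losing only $\mbox{\rm polylog}\,n$ edges. This seems to require a genuine strengthening of Theorem~\ref{splitting}: a finer split-structure result that passes to a subhypergraph on which every relevant ordered codegree is regular up to constants, perhaps in the spirit of the weighted regularization used for Theorem~\ref{weightedversion} and of the matrix-decomposition techniques of~\cite{PT,KTTW}. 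A second, more technical, difficulty is that when $e_t \cap (e_1\cup\cdots\cup e_{t-1})=\emptyset$ (a disconnected forest), there is no hook to extend through, and one must either reorder $F$ or append a disjoint-edge averaging step. Developing the refined split theorem that handles both points while the polylog exponent is controlled is, in my view, the conceptual heart of Conjecture~\ref{main}.
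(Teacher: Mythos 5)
The statement you are asked about is Conjecture~\ref{main}, not a theorem: the paper poses it as an open problem and offers no proof, so there is no argument in the paper to compare yours against. Your write-up is, rightly, not a proof but a programmatic sketch of where the difficulty sits, and on that level it is largely on target.

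One concrete error in the reduction step needs to be flagged. You write that since $\ex(n,F)=O(n^{r-1})$ for any $r$-uniform forest $F$, ``we have $z_{\to}(n,F)=O(n^{r-1})$.'' This implication runs in the wrong direction. Avoiding the \emph{ordered} pattern $F$ is a strictly weaker constraint on $H$ than avoiding all unordered copies of $F$, so $z_{\to}(n,F)\ge\ex(n,F)$, not $\le$; the unordered bound gives you no upper bound whatsoever on $z_{\to}(n,F)$. Indeed if $z_{\to}(n,F)=O(n^{r-1})$ were known, Theorem~\ref{splitting1} would immediately finish the conjecture and there would be nothing left to do. What the unordered bound (together with easy constructions) does give you is the \emph{lower} bound $z_{\to}(n,F)=\Omega(n^{r-1})$, which is what is actually needed as a hypothesis to invoke Theorem~\ref{splitting1} in the $c=r-1$ regime. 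You then correctly state the residual task --- bounding $z_{\to}(n,F)$ by $n^{r-1}\cdot\mbox{\rm polylog}\,n$ --- so the slip is local, but you should fix the direction of the inequality.

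With that corrected, your outline of the obstruction is accurate and is essentially the paper's own framing: Theorem~\ref{splitting} (and its weighted form, Theorem~\ref{weightedversion}) reduces the conjecture to the interval-chromatic-number-$r$ case, and what remains open is precisely a sufficiently strong ordered regularization that controls all the ordered codegree types simultaneously. Your observations that plain codegree cleaning does not respect the prescribed interval placements, and that disconnected forests break the ``hook'' in a naive greedy embedding, identify genuine obstacles rather than gaps in a known proof. So this is a reasonable research sketch of an open problem, not a flawed proof of a theorem.
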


Theorem \ref{splitting} shows that to prove Conjecture \ref{main}, it is enough to consider the setting of $r$-graphs of interval chromatic number $r$. Theorem \ref{cpthm}
verifies this conjecture for crossing paths, and also shows that the $\log n$ factor in Theorem \ref{splitting} is necessary. It would be interesting to find other general classes of ordered $r$-uniform forests for $r \geq 3$ for which Conjecture \ref{main} can be proved. A related problem is to determine for which ordered forests $F$ we have $\ex_{\to}(n, F)= O(n^{r-1})$? This is a hypergraph generalization of Bra\ss' question~\cite{Brass}. Theorem~\ref{splitting} can be used to prove this upper bound for many ordered forests other than just $CP_k^r$.

\medskip

$\bullet$ It appears to be substantially more difficult to determine the exact value of the extremal function for $r$-uniform crossing $k$-paths in the convex geometric setting than in the ordered setting. It is possible to show that for $k \leq 2r - 1$, 
\[ c(k,r) = \lim_{n \rightarrow \infty} \frac{\ex_{\cir}(n,CP_k^r)}{{n \choose r-1}}\]
exists. We have proved several nontrivial upper and lower bounds for $c(k,r)$ that will be presented in forthcoming work, however, we do not as yet know the value of $c(k,r)$ for any pair $(k,r)$ with $2 \leq k \leq r$, even though in the ordered
setting Theorem \ref{cpthm} captures the exact value of the extremal function for all $k\leq r+1$, and $c(r+1,r) = r$.

\medskip

$\bullet$  Let $CM_k$ denote the convex geometric graph consisting of $k$ pairwise crossing line segments. Capoyleas and Pach~\cite{Capoyleas-Pach} proved the following theorem which  extended a result of Ruzsa (he proved the case $k=3$) and settled a question of G\"artner and conjecture of Perles~\cite{Perles}:

\begin{thm} [Capoyleas-Pach~\cite{Capoyleas-Pach}] \label{cgg-cmatching}
	For all $n \geq 2k - 1$, $\ex_{\circlearrowright}(n,CM_k) = 2(k-1)n - {2k - 1 \choose 2}$.
\end{thm}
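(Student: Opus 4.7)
For the lower bound, I would construct a convex geometric graph on $n$ vertices, specifically a \emph{$(k-1)$-triangulation} of the convex $n$-gon --- a maximal chord family containing no $k$ pairwise crossing chords. Such a family can be produced inductively: begin with $K_{2k-1}$ on $2k-1$ consecutive vertices (which avoids $CM_k$ since any $k$ pairwise crossing chords require $2k$ distinct endpoints), and then insert each successive vertex adjacent to a carefully chosen set of $2(k-1)$ nearby vertices in cyclic order, chosen so as to preserve the no-$k$-crossing property. The total edge count is
$\binom{2k-1}{2} + (n-2k+1) \cdot 2(k-1) = 2(k-1)n - \binom{2k-1}{2}$, matching the claimed extremal value.

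For the upper bound, I would induct on $n$. The base case $n = 2k - 1$ gives $e(G) \leq \binom{n}{2} = \binom{2k-1}{2}$, which coincides with $2(k-1)(2k-1) - \binom{2k-1}{2}$. The inductive step reduces to the following key lemma: \emph{any convex geometric graph $G$ on $n \geq 2k$ vertices with no $k$ pairwise crossing chords contains a vertex of degree at most $2(k-1)$.} Given this, delete such a vertex and apply induction to the remaining graph on $n-1$ vertices to obtain $e(G) \leq 2(k-1) + [2(k-1)(n-1) - \binom{2k-1}{2}] = 2(k-1)n - \binom{2k-1}{2}$.

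The hard step is the key lemma, which I would prove by contradiction: assume every vertex of $G$ has degree at least $2k-1$ and derive $k$ pairwise crossing chords. A plausible approach is iterative: start with any edge $e_1 = \{u_1, w_1\}$ of minimum arc length, which splits the polygon into two arcs; the short-arc minimality together with the high degree of $u_1$ force many neighbors of $u_1$ in the long arc, yielding a crossing chord $e_2$. Continuing, once we have $i$ pairwise crossing chords $e_1, \ldots, e_i$ with $2i$ interleaved endpoints, I would use a high-degree endpoint of $e_i$ to locate a neighbor in the innermost arc determined by the earlier chords, producing $e_{i+1}$ that extends the interleaving and hence crosses every $e_j$. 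The threshold $2k-1$ on minimum degree is tight here, corresponding to at most $2(i-1)$ arc positions being blocked at step $i$. The main obstacle is the arc bookkeeping, i.e., showing that at each step the correct arc is guaranteed to contain a neighbor; an alternative route, closer in spirit to the original argument of Capoyleas and Pach, is a compression/shifting procedure that replaces an edge $\{v_j, v_\ell\}$ by a cyclically left-shifted variant whenever doing so preserves the no-$k$-crossing property and then counts edges in the canonical form.
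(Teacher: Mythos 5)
The paper does not prove this theorem; it is stated as a citation of Capoyleas and Pach. So there is no ``paper's own proof'' to compare against, and the question is only whether your sketch would itself constitute a correct proof.

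Your overall architecture --- a $(k-1)$-triangulation as the lower-bound construction plus an inductive upper bound driven by the existence of a low-degree vertex --- is coherent, and the arithmetic in both the lower-bound count and the base case $n=2k-1$ (where $K_{2k-1}$ is $CM_k$-free) checks out. The problem is that everything hinges on the key lemma, and that lemma is not actually proved. Your iterative argument has a gap at the very first step: the ``many neighbors of $u_1$ in the long arc'' give chords \emph{incident to} $u_1$, which share an endpoint with $e_1$ and hence cannot cross it. To get a chord crossing $e_1$ you must already pass to edges not incident to $u_1$, which means invoking the degree bound at a second vertex in a specific arc, and here the bookkeeping you flag as ``the main obstacle'' is not an obstacle that goes away with more care --- it is exactly the content of the lemma. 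In general, after $i$ pairwise crossing chords you have $2i$ arcs, and the next chord must join two \emph{antipodal} arcs; a global minimum-degree hypothesis does not by itself locate an edge between any two prescribed arcs, so the count ``at most $2(i-1)$ arc positions blocked at step $i$'' is an assertion, not a derivation. I also note that the lemma itself, while I believe it is true, is a nontrivial structural statement about $(k-1)$-triangulations (it is closely related to theorems on ears/flippable diagonals of multitriangulations, and the extremal graphs achieve minimum degree exactly $2(k-1)$, leaving no slack), so it cannot simply be waved at.

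Your final sentence points at the right fix: the actual Capoyleas--Pach proof does not go through a degree lemma at all but through a compression/shifting argument, replacing an edge $\{i,j\}$ by a cyclically contracted variant whenever this preserves $CM_k$-freeness and then analyzing the stable configuration. If you want to pursue the degeneracy route you would need a self-contained proof of the low-degree lemma (which is a theorem in its own right), and if you want to follow the original route you should develop the shifting argument rather than the greedy one.
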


For $r \ge 2$, an $r$-uniform {\em crossing $k$-matching} $CM_k^r$ is an ordered $r$-graph whose vertex set is $v_0,v_1,\dots,v_{rk - 1}$, edges are  $\{v_i,v_{i+k},\dots,v_{i+(r-1)k}\}$ for $0 \leq i \leq k - 1$, and vertex ordering $v_0 < v_1 < \dots < v_{rk-1}$. The same definition works in the convex geometric
setting with $<$ a circular ordering of the vertices. Thus $CM_2^r$ is precisely the crossing matching $CM^r$.
It is not hard to see that $$\ex_{\circlearrowright}(n,CM_k^r) = \ex_{\rightarrow}(n,CM_k^r)$$ for all $n,k,r \geq 2$. We can prove that, unlike the results on the paths in Theorem \ref{cpthm}, there are no extra $\log n$ factors in the formulas for crossing matchings and we have $\ex_{\circlearrowright}(n,CM_k^r) = \Theta(n^{r-1})$. We will present sharper bounds in forthcoming work.

\paragraph{Acknowledgement.}

This research was partly conducted  during AIM SQuaRes (Structured Quartet Research Ensembles) workshops, and we gratefully acknowledge the support of AIM.

{\small

\begin{tabular}{ll}
\begin{tabular}{l}
{\sc Zolt\'an F\" uredi} \\
Alfr\' ed R\' enyi Institute of Mathematics \\
Hungarian Academy of Sciences \\
Re\'{a}ltanoda utca 13-15 \\
H-1053, Budapest, Hungary \\
E-mail:  \texttt{zfuredi@gmail.com}.
\end{tabular}
& \begin{tabular}{l}
{\sc Tao Jiang} \\
Department of Mathematics \\ Miami University \\ Oxford, OH 45056, USA. \\ E-mail: \texttt{jiangt@miamioh.edu}. \\
$\mbox{ }$
\end{tabular} \\ \\
\begin{tabular}{l}
{\sc Alexandr Kostochka} \\
University of Illinois at Urbana--Champaign \\
Urbana, IL 61801 \\
and Sobolev Institute of Mathematics \\
Novosibirsk 630090, Russia. \\
E-mail: \texttt {kostochk@math.uiuc.edu}.
\end{tabular} & \begin{tabular}{l}
{\sc Dhruv Mubayi} \\
Department of Mathematics, Statistics \\
and Computer Science \\
University of Illinois at Chicago \\
Chicago, IL 60607. \\
\texttt{E-mail: mubayi@uic.edu}.
\end{tabular} \\ \\
\begin{tabular}{l}
{\sc Jacques Verstra\"ete} \\
Department of Mathematics \\
University of California at San Diego \\
9500 Gilman Drive, La Jolla, California 92093-0112, USA. \\
E-mail: {\tt jverstra@math.ucsd.edu.}
\end{tabular}
\end{tabular}
}

\end{document}